\newenvironment{proof}{{\bf Proof}:\ }%
   {~\ \hfill $\Box$\vspace{0,5cm}}
\newtheorem{prop}{Property}[section]
\newtheorem{theorem}{Theorem}[section]
\newtheorem{rmk}{Remark}[section]
\newtheorem{lemma}[theorem]{Lemma}
\newtheorem{proposition}[theorem]{Proposition}
\newtheorem{coro}[theorem]{Corollary}
\newtheorem{fact}{Fact}[section]
\numberwithin{equation}{section}
\begin{document}

\title{On the vertices belonging to all, some, none minimum dominating set}
\author{
Valentin Bouquet\footnotemark[1]
\and
Fran\c cois Delbot\footnotemark[2]
\and
Christophe\ Picouleau \footnotemark[3]
}
\date{\today}

\footnotetext[1]{ \noindent
Université Paris-Nanterre, Paris (France). Email: {\tt
valentin.bouquet@parisnanterre.fr
}
}

\footnotetext[2]{ \noindent
Sorbonne Université, Laboratoire d'Informatique de Paris 6 (LIP6), Paris (France). Email: {\tt
francois.delbot@lip6.fr
}}

\footnotetext[3]{ \noindent
Conservatoire National des Arts et M\'etiers, CEDRIC laboratory, Paris (France). Email: {\tt
christophe.picouleau@cnam.fr}
}

\graphicspath{{.}{graphics/}}

%\begin{document}
\maketitle
\begin{abstract}
We characterize the vertices belonging to all minimum dominating sets, to some minimum dominating sets but not all, and to no minimum dominating set. We refine this characterization for some well studied sub-classes of graphs: chordal, claw-free, triangle-free. Also we exhibit some graphs answering to some open questions of the literature on minimum dominating sets.

 \vspace{0.2cm}
\noindent{\textbf{Keywords}\/}:  Minimum Dominating Set, Chordal graph, claw-free graph, cograph.
 \end{abstract}

%\newpage
\parindent=0cm
%----------------------------------------------------------------------------------------------------------------------------------
\section{Introduction}
We will only be concerned with simple undirected graphs $G=(V,E)$. The reader is referred to \cite{Bondy}  for definitions and notations in graph theory. An element $ab\in E$ is called an {\it edge}, if $ab\not\in E$ then $ab$ is called a {\it non-edge}. For a vertex $v\in V$ let us denote by $N(v)$ its neighborhood, $N[v]=N(v)\cup\{v\}$ its closed neighborhood. $N_k(v)$ is the set of vertices at distance exactly $k$ of $v$. Hence $N(v)=N_1(v)$ and $N[v]=N_0(v)\cup N_1(v)$. A vertex $v$ is {\it isolated} if $N(v)=\emptyset$. A vertex $v$ is {\it universal} if $N[v]=V$. Two distinct vertices $u,v$ are {\it twins} if $N(v)=N(u)$, are {\it false twins} if $N[v]=N[u]$.

For a subset $S\subseteq V$, we let $G[S]$ denote the subgraph of $G$ {\it induced} by $S$, which has vertex set~$S$ and edge set $\{uv\in E\; |\; u,v\in S\}$. Moreover, for a vertex $v\in V$, we write $G-v=G[V\setminus \{v\}]$ and for a subset $V'\subseteq V$ we write $G-V'=G[V\setminus V']$.
For a set $\{H_1,\ldots,H_p\}$ of graphs, $G$ is {\it $(H_1,\ldots,H_p)$-free} if $G$ has no induced subgraph isomorphic to a graph in $\{H_1,\ldots,H_p\}$; if $p=1$ we may write $H_1$-free instead of $(H_1)$-free. For two vertex disjoint induced subgraphs $G[A],G[B]$ of $G$, $G[A]$ is {\it complete} to $G[B]$ if $ab$ is an edge for any $a\in A$ and $b\in B$, $G[A]$ is {\it anti-complete} to $G[B]$ if $ab$ is an non-edge for any $a\in A$ and $b\in B$.

A  set $S\subseteq V$ is called a {\it stable set} or an {\it independent set} if  any pairwise distinct vertices $u,v\in S$ are non adjacent. The maximum cardinality of an independent set in $G$ is denoted by $\alpha(G)$. A  set $S\subseteq V$ is called a {\it clique}  if  any pairwise distinct vertices $u,v\in S$ are  adjacent. When $G[V]$ is a clique then  $G$ is a {\it complete graph}. $K_p,p\ge 1,$ is the clique or the complete graph on $p$ vertices.

For $n\geq 1$, the graph $P_n=u_1-u_2-\cdots-u_n$ denotes the {\it cordless path} on $n$ vertices, that is, $V({P_n})=\{u_1,\ldots,u_n\}$ and $E({P_n})=\{u_iu_{i+1}\; |\; 1\leq i\leq n-1\}$.
For $n\geq 3$, the graph $C_n$ denotes the {\it cordless cycle} on $n$ vertices, that is,  $V({C_n})=\{u_1,\ldots,u_n\}$ and $E({C_n})=\{u_iu_{i+1}\; |\; 1\leq i\leq n-1\}\cup \{u_nu_1\}$. For $n\ge 4$, $C_n$ is called a {\it hole}. The graph $C_3=K_3$ is also called the {\it triangle}.
The {\it claw} $K_{1,3}$ is the 4-vertex star, that is, the graph with vertices $u$, $v_1$, $v_2$, $v_3$ and edges $uv_1$, $uv_2$, $uv_3$. The {\it diamond}  is the 4-vertex complete graph $K_4$ minus an edge.The {\it net} is the graph with six vertices $u_1,u_2,u_3$, $v_1$, $v_2$, $v_3$ and edges $u_1u_2,u_2u_3,u_1u_3,u_1v_1,u_2v_2,u_3v_3$. The {\it bull} is the graph with five vertices $u_1,u_2,u_3$, $v_1$, $v_2$ and edges $u_1u_2,u_2u_3,u_1u_3,u_1v_1,u_2v_2$. The {\it paw} is the graph with four vertices $u_1,u_2,u_3$, $v_1$ and edges $u_1u_2,u_2u_3,u_1u_3,u_1v_1$.  (see Figure \ref{defgraphs}).

\begin{figure}[htbp]
\begin{center}
\includegraphics[width=10cm, height=5cm, keepaspectratio=true]{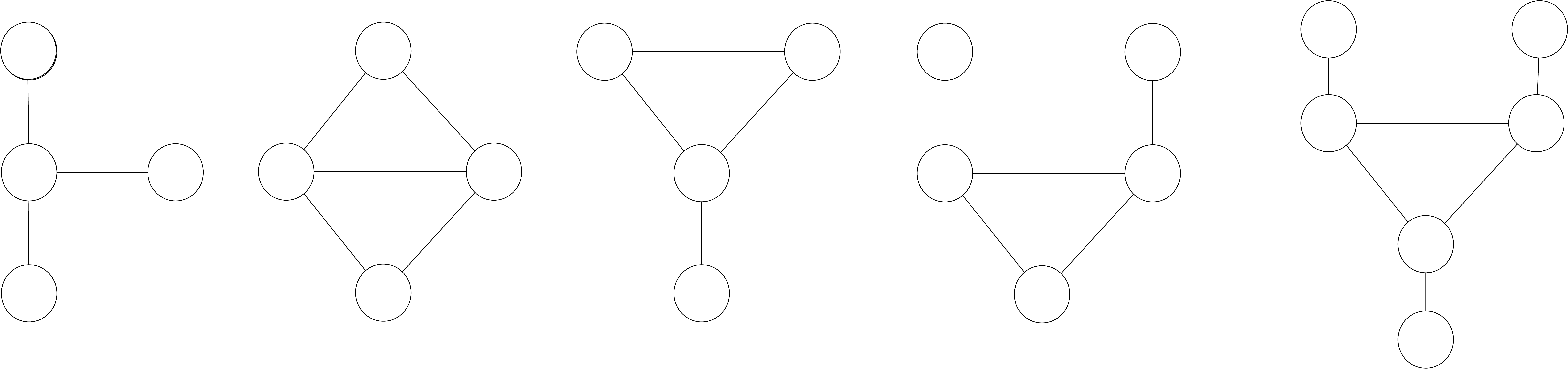}
\end{center}
\caption{The claw, the diamond, the paw, the bull, the net.}
\label{defgraphs}
\end{figure}

A  set $S\subseteq V$ is called a {\it dominating set} if every vertex $v\in V$ is either an element of $S$ or is adjacent to an element of $S$. The minimum cardinality of a dominating set in $G$ is denoted by $\gamma(G)$ and called the {\it dominating number} of $G$. A dominating set $S$ with $\vert S\vert=\gamma(G)$ is called a {\it Minimum Dominating Set}, a mds for short. Following \cite{DomBook} a mds is also called a $\gamma$-set.  If $S\subset V$ is both a dominating and an independent set then $S$ is an {\it independent dominating set}. The minimum cardinality of an independant dominating set in $G$ is denoted by $i(G)$. Clearly we have $\gamma(G)\le i(G)\le \alpha(G)$. Note that a minimum independent dominating set is a {\it minimum maximal independent set}.

In a same flavour than Boros et al. in \cite{Boros}
for the maximum stable set, let $\Omega(G)$ denote the family of all Minimum Dominating Sets of the graph $G$. Let $core(G)=\bigcap\{S: S\in\Omega(G)\}$ be the set of vertices belonging to all $\gamma$-sets.  Similarly, let us denote $corona(G)=\bigcup\{S: S\in \Omega(G)\}$ as the set of vertices belonging to some $\gamma$-set of $G$. Let us denote by $anticore(G)=V-corona(G)$ the set of vertices not belonging to any mds of $G$.

We are interested in characterizing the vertices $v\in core(G)$, the vertices $v\in corona(G)$, the vertices $v\in anticore(G)$.

 Let $S\subset V$ and $u\in S$. We say that a vertex $v$ is a {\it private neighbor} of $u$, with respect to $S$, if $N[v]\cap S=\{u\}$. We define the {\it private neighbor set} of $u$, with respect to $S$, to be $pn[u,S]=\{v: N[v]\cap S=\{u\}\}$. As remarked in \cite{DomBook} page 18, if $S$ is a $\gamma$-set then for every $u\in S,pn[u,S]\ne\emptyset$.\\

The following partition of the vertex set $V$ is defined in \cite{DomBook} page 136:

\begin{itemize}
\item $V^0=\{v\in V: \gamma(G-v)=\gamma(G)\}$;
\item $V^+=\{v\in V: \gamma(G-v)>\gamma(G)\}$;
\item $V^-=\{v\in V: \gamma(G-v)<\gamma(G)\}$.
\end{itemize}

The following characterizations are given in \cite{DomBook} page 137:

\begin{theorem}\label{V+}
A vertex $v\in V^+$ iff
\begin{enumerate}
\item $v$ is not an isolate vertex and $v\in core(G)$, and
\item no subset $S\subseteq V-N[v]$ with cardinality $\gamma(G)$ dominates $G-v$.
\end{enumerate}
\end{theorem}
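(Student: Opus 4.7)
The plan is a straightforward case analysis resting on the elementary inequality $\gamma(G)-1 \le \gamma(G-v)$, which follows from observing that $S \cup \{v\}$ is a dominating set of $G$ whenever $S$ dominates $G-v$.

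For the forward direction, assume $v \in V^+$, so $\gamma(G-v) > \gamma(G)$. An isolated $v$ would give $\gamma(G-v)=\gamma(G)-1$ by deleting $v$ from any $\gamma$-set (which must contain $v$), contradicting $v \in V^+$. Any $\gamma$-set of $G$ missing $v$ is itself a dominating set of $G-v$ of size $\gamma(G)$, again contradicting $v \in V^+$, so $v \in core(G)$. Finally, a subset $S \subseteq V \setminus N[v]$ of size $\gamma(G)$ dominating $G-v$ would directly violate $\gamma(G-v)>\gamma(G)$, establishing (2).

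For the backward direction, suppose (1) and (2) hold but $\gamma(G-v)\le \gamma(G)$. Fix a $\gamma$-set $S'$ of $G-v$, so that $\gamma(G)-1 \le |S'| \le \gamma(G)$. If $S'$ dominates $v$ in $G$, then $S'$ dominates $G$ with $|S'|\le \gamma(G)$, yielding a $\gamma$-set of $G$ that avoids $v$ and so contradicting $v\in core(G)$. Otherwise $S' \subseteq V\setminus N[v]$: if $|S'|=\gamma(G)$, condition (2) is directly violated; if $|S'|=\gamma(G)-1$, I would invoke the non-isolation of $v$ to pick some $w\in N(v)$ and conclude that $S'\cup\{w\}$ is a $\gamma$-set of $G$ excluding $v$, once more contradicting $v\in core(G)$.

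The subtle point is this last sub-case: condition (2) is vacuous whenever $|V\setminus N[v]|<\gamma(G)$, so the contradiction cannot be derived from (2) alone. This is where the non-isolation hypothesis in (1) earns its keep, supplying the neighbor $w$ used to inflate $S'$ into a $\gamma$-set of $G$ avoiding $v$. Everything else reduces to routine bookkeeping.
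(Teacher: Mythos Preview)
The paper does not supply its own proof of this theorem: it is quoted verbatim from \cite{DomBook} (page~137) as background, so there is nothing in the paper to compare your argument against.

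That said, your proof is correct and is essentially the standard elementary argument one would expect. The forward direction is routine. In the backward direction your case split on whether the $\gamma$-set $S'$ of $G-v$ meets $N(v)$ is the natural one, and the handling of the sub-case $|S'|=\gamma(G)-1$ via a neighbour $w$ of $v$ is exactly right. One minor remark: in Case~A, when $S'$ dominates $v$, the possibility $|S'|=\gamma(G)-1$ would actually give a dominating set of $G$ strictly smaller than $\gamma(G)$, an immediate absurdity; so in fact $|S'|=\gamma(G)$ is forced there, and you obtain your $\gamma$-set avoiding $v$ directly. Your phrasing ``yielding a $\gamma$-set of $G$ that avoids $v$'' covers this, but it is worth noting that the sub-case $|S'|=\gamma(G)-1$ only genuinely survives into Case~B.

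Your closing commentary about condition~(2) being vacuous when $|V\setminus N[v]|<\gamma(G)$ is accurate and pinpoints precisely why the non-isolation hypothesis is not redundant.
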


\begin{theorem}\label{V-}
A vertex $v\in V^-$ iff $pn[v,S]=\{v\}$ for some $\gamma$-set containing $v$.
\end{theorem}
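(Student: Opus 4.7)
The plan is to prove the biconditional by unpacking each direction via a direct construction, using the $V^-$ definition $\gamma(G-v)<\gamma(G)$ on one side and the equality $pn[v,S]=\{v\}$ on the other. The engine in both directions is the same observation: $S\setminus\{v\}$ dominates $G-v$ if and only if no vertex of $V\setminus\{v\}$ has $v$ as its unique neighbor in $S$.

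For the forward direction, I would take $v\in V^-$ and let $D$ be a $\gamma$-set of $G-v$; by assumption $|D|\le \gamma(G)-1$. I would then set $S=D\cup\{v\}$ and argue it is a $\gamma$-set of $G$ that contains $v$. Two small observations are needed. First, no $d\in D$ can lie in $N(v)$: otherwise $D$ itself would already dominate $v$ (in addition to all of $V\setminus\{v\}$, which it does by hypothesis), giving a dominating set of $G$ of size strictly less than $\gamma(G)$, a contradiction. This both forces $|D|=\gamma(G)-1$ and gives $N[v]\cap S=\{v\}$, so $v\in pn[v,S]$. Second, for any $w\neq v$, since $D$ dominates $G-v$, some $d\in D\subseteq S$ lies in $N[w]$; as $d\ne v$, we have $|N[w]\cap S|\ge 2$, so $w\notin pn[v,S]$. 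Together these give $pn[v,S]=\{v\}$, exactly as required.

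For the converse, I would assume $S$ is a $\gamma$-set with $v\in S$ and $pn[v,S]=\{v\}$, and I would show that $S'=S\setminus\{v\}$ dominates $G-v$. Take any $u\in V\setminus\{v\}$: since $S$ dominates $G$, there is $s\in S\cap N[u]$; if the only such $s$ were $v$, then by definition $u\in pn[v,S]=\{v\}$, forcing $u=v$, a contradiction. Hence some $s\in S'\cap N[u]$ exists, so $S'$ is a dominating set of $G-v$ of size $\gamma(G)-1$, giving $\gamma(G-v)<\gamma(G)$, i.e.\ $v\in V^-$.

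There is no real obstacle here; the only subtle point is recognizing that in the forward direction the minimum dominating set of $G-v$ must be disjoint from $N[v]$ (otherwise we would contradict the minimality of $\gamma(G)$), which is precisely what allows $v$ itself to be its own private neighbor with respect to the enlarged set $S=D\cup\{v\}$.
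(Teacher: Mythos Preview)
The paper does not supply its own proof of this theorem; it is quoted from \cite{DomBook} (page~137) as a known characterization. Your argument is the standard one and is correct in both directions.

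One small imprecision: in the forward direction you write that for $w\ne v$, since some $d\in D$ lies in $N[w]$ and $d\ne v$, ``we have $|N[w]\cap S|\ge 2$''. This does not follow in general: if $w\notin N[v]$ then $N[w]\cap S$ may well equal $\{d\}$ and have size~$1$. What you actually need (and what does follow immediately) is only that $N[w]\cap S\ne\{v\}$, since $d\in N[w]\cap S$ with $d\ne v$; by the definition of $pn[v,S]$ this alone gives $w\notin pn[v,S]$. With that minor rewording the proof is complete.
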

\begin{rmk}\label{ACV-}
From Theorem \ref{V-} no vertex $v\in anticore(G)$ can belong to $V^-$.
\end{rmk}

The article is organized as follows. In the next section we give the characterizations for a vertex $v$ to be a member of  either $core(G)$ or $corona(G)-core(G)$ or $anticore(G)$. Then for some subclasses of graphs we show that no vertex can be in $core(G)\cap V^0$. Then we answer to some open questions given in \cite{Samodivkin} and, with the same flavor, we give some graphs with a particular partition of their vertex set. 

\section{The characterizations}
In this section we use Theorems \ref{V+} and \ref{V-} to characterize the membership of a given vertex: $v\in core(G)$ or $v\in corona(G)-core(G)$ or $v\in anticore(G)$. Also we give algorithmic issues of our characterization.

Given $G=(V,E)$ and  $v\in V$ we define the graph $G_v+u=(V',E')$ as follows: $V'=V\cup\{u\}$ and $E'=E\cup \{uv\}$.

\begin{rmk}\label{G+u}
One can note that any mds of $G_v+u$ contains either $u$ or $v$. Moreover if there exists a mds of $G_v+u$ that contains $u$ then there is another one that contains $v$ (note that the converse is not necessarily true).
\end{rmk}
\begin{theorem}\label{vinanticore}
    $v \in anticore(G)$ iff $\gamma(G_v+u)= \gamma(G)+1$.\end{theorem}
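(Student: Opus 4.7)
The plan is to use Remark \ref{G+u} to tightly relate mds's of $G_v+u$ with mds's of $G$ that contain $v$, and then observe that $v \in corona(G)$ is exactly the statement that such an mds of $G$ exists.

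First I would establish the two-sided sandwich $\gamma(G) \le \gamma(G_v+u) \le \gamma(G)+1$. The upper bound is immediate: for any mds $S$ of $G$, the set $S \cup \{u\}$ dominates $G_v+u$. For the lower bound, take any mds $S'$ of $G_v+u$. By Remark \ref{G+u}, either $v \in S'$, or $u \in S'$ and in the latter case one can replace $u$ by $v$ without increasing the size and still dominate (since $N_{G_v+u}[u] \subseteq N_{G_v+u}[v]$). So I may assume $v \in S'$; then $S' \setminus \{u\}$ still dominates $G_v+u$ (because $v$ dominates $u$), so I may further assume $u \notin S'$. Now $S' \subseteq V$ and $S'$ dominates every vertex of $V$ in $G_v+u$, hence in $G$ as well (the two graphs agree on $V$). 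So $S'$ is a dominating set of $G$ of size $\gamma(G_v+u)$, giving the inequality.

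Next I would prove the key equivalence $\gamma(G_v+u) = \gamma(G) \iff v \in corona(G)$. For the $\Leftarrow$ direction, if $v$ lies in some mds $S$ of $G$, then $S$ already dominates $u$ (through the edge $uv$), so $S$ is a dominating set of $G_v+u$ of size $\gamma(G)$, which combined with the lower bound forces equality. For the $\Rightarrow$ direction, suppose $\gamma(G_v+u) = \gamma(G)$ and let $S'$ be an mds of $G_v+u$ of this common size. By the argument above, $S'$ may be chosen so that $v \in S'$ and $u \notin S'$, and then $S'$ is a dominating set of $G$ of size $\gamma(G)$ containing $v$, so $v \in corona(G)$.

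Combining the sandwich with this equivalence gives that $\gamma(G_v+u) = \gamma(G)+1$ exactly when $v \notin corona(G)$, i.e.\ $v \in anticore(G)$, which is the statement. No step looks like a genuine obstacle; the only thing to be careful about is the ``WLOG $v \in S'$ and $u \notin S'$'' reduction for mds's of $G_v+u$, but this is precisely what Remark \ref{G+u} provides.
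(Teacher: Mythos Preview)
Your proof is correct and follows essentially the same approach as the paper: both use Remark \ref{G+u} to pass from an mds of $G_v+u$ to a dominating set of $G$ containing $v$, and conversely observe that an mds of $G$ containing $v$ already dominates $G_v+u$. Your version is simply more explicit, spelling out the sandwich $\gamma(G)\le\gamma(G_v+u)\le\gamma(G)+1$ and the equivalence $\gamma(G_v+u)=\gamma(G)\iff v\in corona(G)$, whereas the paper compresses these into two brief paragraphs.
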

\begin{proof}
Let $v\in anticore(G)$. From Remark \ref{G+u} any mds $S$ of $G_v+u$ contains either $u$ or $v$. It follows that $S$ has one vertex more than any mds of $G$. Now let $v\not\in anticore(G)$. Suppose that $\gamma(G_v+u)= \gamma(G)+1$. It exists $S$ a mds of $G$ containing $v$. Yet $S$ is a dominating set of $G_v+u$, a contradiction.
\end{proof}

\begin{lemma}\label{V-isol}
$v\in core(G)\cap V^-$ if and only if $v$ is an isolated vertex.
\end{lemma}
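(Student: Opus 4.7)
The plan is to treat the two directions of the equivalence separately, with the nontrivial direction handled by a short swap argument.

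For the $(\Leftarrow)$ direction, I would argue directly: if $v$ is isolated then no vertex other than $v$ can dominate $v$, so every dominating set contains $v$; in particular $v\in core(G)$. Moreover, removing $v$ from any $\gamma$-set of $G$ yields a dominating set of $G-v$, so $\gamma(G-v)\le \gamma(G)-1$, and obviously $\gamma(G-v)<\gamma(G)$, which places $v$ in $V^-$.

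For the $(\Rightarrow)$ direction, I assume $v\in core(G)\cap V^-$. The main tool is Theorem \ref{V-}: there is a $\gamma$-set $S$ containing $v$ with $pn[v,S]=\{v\}$. The key structural consequences I would extract from this identity are twofold. First, since $v\in pn[v,S]$, we have $N[v]\cap S=\{v\}$, so no neighbor of $v$ belongs to $S$. Second, since no other vertex is in $pn[v,S]$, every $w\in N(v)$ satisfies $N[w]\cap S\ne\{v\}$, and combined with $v\in N[w]\cap S$ this forces $w$ to have some other neighbor in $S\setminus\{v\}$. Hence $S\setminus\{v\}$ dominates every vertex of $G$ except possibly $v$ itself.

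Now I would derive the contradiction assuming $v$ has a neighbor $w$. By the first observation, $w\notin S$, so the set $S':=(S\setminus\{v\})\cup\{w\}$ has cardinality $|S|=\gamma(G)$, dominates everything $S\setminus\{v\}$ already dominated, and dominates $v$ through $w$. Thus $S'$ is a $\gamma$-set not containing $v$, contradicting $v\in core(G)$. So $v$ has no neighbor, i.e., $v$ is isolated.

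The only nontrivial point is pinning down exactly what $pn[v,S]=\{v\}$ tells us, in particular the observation that $v$ itself being a private neighbor of $v$ forces $N(v)\cap S=\emptyset$; after that, the swap $v\mapsto w$ is routine. There is no arithmetic to grind through and no case analysis.
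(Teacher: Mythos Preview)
Your proof is correct and follows essentially the same approach as the paper: both directions are handled as you describe, and the nontrivial $(\Rightarrow)$ direction uses Theorem~\ref{V-} to obtain a $\gamma$-set $S$ with $pn[v,S]=\{v\}$ and then swaps $v$ for a neighbor $w$ to produce a $\gamma$-set omitting $v$. Your write-up is in fact slightly more careful than the paper's, since you make explicit that $v\in pn[v,S]$ forces $N(v)\cap S=\emptyset$, which is what guarantees $|S'|=\gamma(G)$ after the swap.
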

\begin{proof}
Clearly if $v$ is isolated then $v\in V^-$ and $v\in core(G)$. Conversely let $v\in V^-$. If $v$ is isolated then any dominating set has to contain $v$. So $v\in core(G)\cap V^-$.  Suppose that $v$ is not isolated. From Theorem \ref{V-} it exists a mds $S, v\in S,$ such that $pn[v,S]=\{v\}$.  For any $w\in N(v)$,  $\vert N[w]\cap S\vert>1$ and $S'=S-\{v\}\cup\{w\}$ is a mds.  So $v\not\in core(G)$.
\end{proof}

\begin{theorem}\label{vincore}
$v\in core(G)$ iff either
\begin{enumerate}
\item $v$ is isolated or
\item $\gamma(G-v)>\gamma(G)$ or
\item $\gamma(G-v)=\gamma(G)$ and every subset $S,\vert S\vert=\gamma(G),$ that dominates $G-v$ is such that $S\cap N[v]=\emptyset$.
\end{enumerate}
\end{theorem}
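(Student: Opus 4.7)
My plan is to split the equivalence along the trichotomy $v\in V^-, V^0, V^+$, which corresponds exactly to the sign of $\gamma(G-v)-\gamma(G)$. The three listed cases should line up with these regimes: case (1) handles the pathological situation $v$ isolated (which by Lemma \ref{V-isol} is the only way a $core$-vertex lies in $V^-$), case (2) is precisely $v\in V^+$, and case (3) captures the remaining $V^0$ part of $core(G)$. Throughout I read ``subset $S$ that dominates $G-v$'' as meaning $S\subseteq V\setminus\{v\}$ is a dominating set of the graph $G-v$, so $S\cap N[v]=S\cap N(v)$.

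For the ``if'' direction, I would assume one of (1)--(3) holds and argue by contradiction, supposing there were a mds $S$ of $G$ with $v\notin S$. If $v$ is isolated, then $v$ must lie in every dominating set, contradiction. If $\gamma(G-v)>\gamma(G)$, then $S\subseteq V\setminus\{v\}$ and $S$ still dominates $G-v$, so $\gamma(G-v)\le |S|=\gamma(G)$, contradiction. If (3) holds, then $S$ is a subset of size $\gamma(G)$ dominating $G-v$, so by hypothesis $S\cap N[v]=\emptyset$; but then no vertex of $S$ is equal or adjacent to $v$, so $S$ fails to dominate $v$, contradicting the fact that $S$ is a dominating set of $G$.

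For the ``only if'' direction, assume $v\in core(G)$. If $v$ is isolated we are in case (1). Otherwise Lemma \ref{V-isol} gives $v\notin V^-$, hence $\gamma(G-v)\ge \gamma(G)$. If the inequality is strict we are in case (2). Otherwise $\gamma(G-v)=\gamma(G)$, and I must verify (3). Suppose toward contradiction that some $S\subseteq V\setminus\{v\}$ with $|S|=\gamma(G)$ dominates $G-v$ but has a vertex $w\in S\cap N[v]$; since $v\notin S$ we get $w\in N(v)$, so $w$ dominates $v$ in $G$. Then $S$ is a dominating set of $G$ of minimum size, i.e.\ an mds, yet $v\notin S$, contradicting $v\in core(G)$.

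None of the steps is genuinely deep; the main obstacle is really just the bookkeeping about which subsets of $V$ are being quantified over, and ensuring that the isolated-vertex edge case (where $N[v]=\{v\}$ and the condition ``$S\cap N[v]=\emptyset$'' becomes vacuous/non-applicable because $\gamma(G-v)<\gamma(G)$) is separated off via Lemma \ref{V-isol}. Once that dichotomy is noted, both directions reduce to one-line arguments using the fact that any non-$v$ mds of $G$ restricted to $V\setminus\{v\}$ must dominate $v$ through a neighbor.
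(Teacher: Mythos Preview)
Your proof is correct and follows essentially the same route as the paper's: both split along the $V^-/V^0/V^+$ trichotomy, use Lemma \ref{V-isol} for the isolated case, and for case (3) observe that a $\gamma$-set of $G$ avoiding $v$ would be a size-$\gamma(G)$ dominating set of $G-v$ meeting $N[v]$. The only cosmetic difference is that for case (2) in the ``if'' direction the paper cites Theorem \ref{V+}, whereas you give the direct one-line argument that a $\gamma$-set of $G$ avoiding $v$ would witness $\gamma(G-v)\le\gamma(G)$.
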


\begin{proof}
$\Leftarrow$: if $v$ is isolated then $v\in core(G)$. If $\gamma(G-v)>\gamma(G)$ then $v\in V^+$ and from Theorem \ref{V+} $v\in core(G)$. Suppose 3.: $v\in V^0$ and  $v$ is not isolated. Let $S$ be a $\gamma$-set of $G$. If $v\not\in S$ then $S\cap N[v]\ne\emptyset$ but $S-\{v\}$ has cardinality $\gamma(G)$ and is a dominating set of $G-v$, a contradiction. So $v\in core(G)$

$\Rightarrow$: $v\in core(G)$. If $v\in V^-$ from Lemma \ref{V-isol} $v$ is isolated. If $v\in V^+$ then $\gamma(G-v)>\gamma(G)$. Now $v\in V^0$ that is  $\gamma(G-v)=\gamma(G)$. It exists $S,\vert S\vert =\gamma(G),$ that dominates $G-v$. If $S\cap N[v]\ne\emptyset$ then $S$ is a mds for $G$ with $v\not\in S$ and $v\not\in core(G)$. Thus every $S,\vert S\vert=\gamma(G),$ that dominates $G-v$ is such that $S\cap N[v]=\emptyset$.
\end{proof}

The figure \ref{Threecore} shows three types of vertices in $core(G)$ as stated by Theorem \ref{vincore}.\\
\begin{figure}[htbp]
\begin{center}
\includegraphics[width=9cm, height=5cm, keepaspectratio=true]{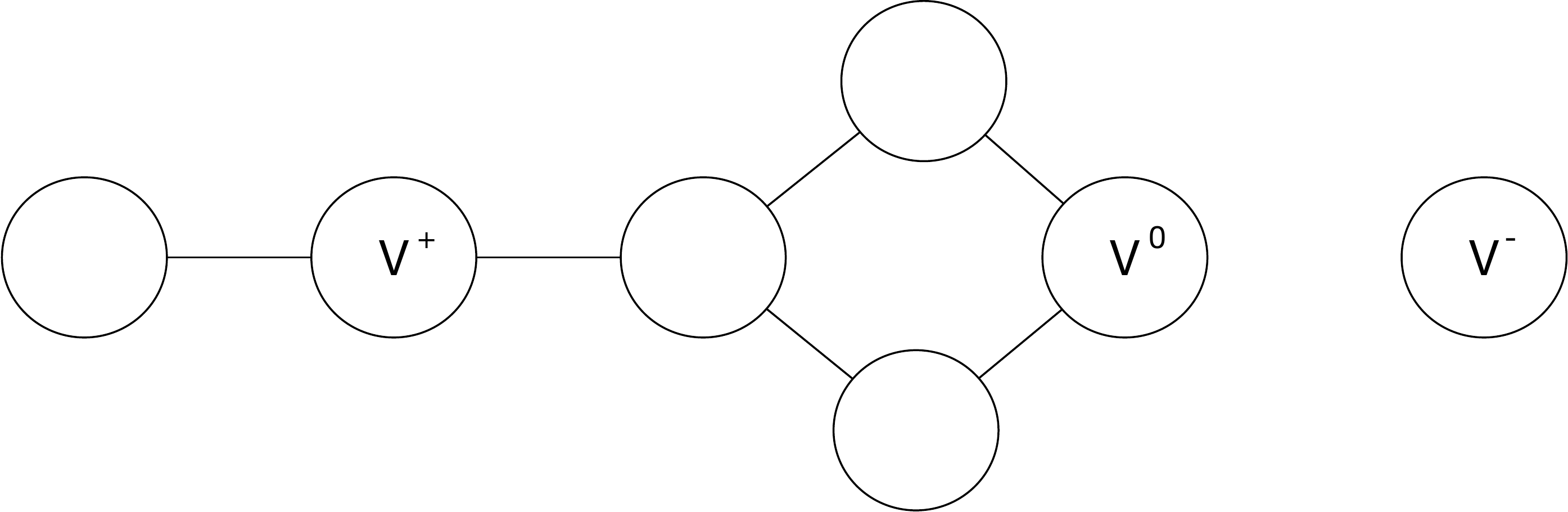}
\end{center}
\caption{$v^+\in core(G)\cap V^+$, $v^0\in core(G)\cap V^0$, $v^-\in core(G)\cap V^-$.}
\label{Threecore}
\end{figure}

From Theorem \ref{vinanticore}  the theorem \ref{vincore} can be stated as follow. This formulation will be more useful for a computational point of view.

\begin{theorem}\label{vincorealg}
$v\in core(G)$ iff either
\begin{enumerate}
\item $v$ is isolated or
\item $\gamma(G-v)>\gamma(G)$ or
\item $\gamma(G-v)=\gamma(G)$ and every neighbor $u\in N(v)$ is such that $u\in anticore(G-v)$.
\end{enumerate}
\end{theorem}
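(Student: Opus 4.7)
The plan is to notice that conditions 1 and 2 of the two theorems are word-for-word identical, so the entire work reduces to showing equivalence of the two versions of condition 3 under the common hypothesis $\gamma(G-v)=\gamma(G)$. I would proceed by unpacking definitions and chaining two essentially trivial observations.

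First, since any subset $S\subseteq V(G-v)=V\setminus\{v\}$ automatically excludes $v$, the condition $S\cap N[v]=\emptyset$ is equivalent to $S\cap N(v)=\emptyset$. Second, under the hypothesis $\gamma(G-v)=\gamma(G)$, the subsets $S\subseteq V\setminus\{v\}$ of cardinality $\gamma(G)$ that dominate $G-v$ are precisely the minimum dominating sets of $G-v$. Combining these, condition 3 of Theorem~\ref{vincore} becomes: no mds of $G-v$ meets $N(v)$.

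The last step is to rewrite the universal statement ``no mds of $G-v$ contains any vertex of $N(v)$'' as ``every $u\in N(v)$ belongs to no mds of $G-v$'', which by definition of $anticore$ is ``every $u\in N(v)$ lies in $anticore(G-v)$'', yielding condition 3 of Theorem~\ref{vincorealg}. I do not expect any real obstacle; the role of Theorem~\ref{vinanticore} is not in the logical equivalence itself but in giving an effective test, namely $\gamma((G-v)_u+u')=\gamma(G-v)+1$, for membership in $anticore(G-v)$, which is precisely what makes the reformulation algorithmically convenient. The degenerate case $N(v)=\emptyset$ merely says $v$ is isolated and is absorbed by condition 1 in both theorems, so it does not affect the argument.
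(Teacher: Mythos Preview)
Your proposal is correct and matches the paper's own reasoning, which is not spelled out in detail: the paper simply asserts that Theorem~\ref{vincorealg} is a restatement of Theorem~\ref{vincore} ``from Theorem~\ref{vinanticore}'', and your unpacking of condition~3 via the definition of $anticore(G-v)$ is exactly the intended bridge. Your remark that Theorem~\ref{vinanticore} is needed only for the algorithmic test and not for the logical equivalence is also accurate.
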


From Theorems \ref{vinanticore} and \ref{vincore} we obtain the following.
\begin{coro}\label{vinK}
$v\in corona(G)-core(G)$ iff either
\begin{enumerate}
\item $v\in V^-$ and $v$ is not isolated or
\item $v\in V^0$ and it exists $S,\vert S\vert=\gamma(G),$ that dominates $G-v$  such that $S\cap N[v]\not=\emptyset$ and $\gamma(G_v+u)= \gamma(G)$.
\end{enumerate}
\end{coro}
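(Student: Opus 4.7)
The plan is to decompose $corona(G)-core(G)=V\setminus(core(G)\cup anticore(G))$ and to apply Theorems \ref{vincore} and \ref{vinanticore} in tandem. Since by Theorem \ref{vinanticore} the condition $v\notin anticore(G)$ is equivalent to $\gamma(G_v+u)=\gamma(G)$, what the corollary really does is combine that equation with an explicit form of $v\notin core(G)$, which takes a different shape depending on whether $v\in V^+$, $V^0$ or $V^-$. So I would just split on this partition.

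First I would dispose of $V^+$: condition (1) of Theorem \ref{V+} says that every $v\in V^+$ is non-isolated and belongs to $core(G)$, so $V^+\cap(corona(G)-core(G))=\emptyset$ and this class contributes nothing. Next, for $v\in V^-$, Lemma \ref{V-isol} tells us that $v\notin core(G)$ is equivalent to $v$ not being isolated, while Remark \ref{ACV-} gives $V^-\cap anticore(G)=\emptyset$, so $v\in corona(G)$ is automatic and $\gamma(G_v+u)=\gamma(G)$ is free; this produces exactly condition (1). Finally, for $v\in V^0$, such a vertex is automatically non-isolated (isolated vertices lie in $V^-$), so Theorem \ref{vincore} reduces $v\notin core(G)$ to the existence of $S$ of cardinality $\gamma(G)$ dominating $G-v$ with $S\cap N[v]\ne\emptyset$; conjoining this with $\gamma(G_v+u)=\gamma(G)$ from Theorem \ref{vinanticore} yields condition (2).

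The only subtle point, rather than a genuine obstacle, is that in the $V^0$ case the mere existence of an $S$ with $S\cap N[v]\ne\emptyset$ does not by itself entail $v\in corona(G)$: such an $S$ is automatically an mds of $G$, but it need not contain $v$, and no other mds need contain $v$ either. For example, in the claw each leaf $v$ lies in $V^0\setminus core(G)$ but is also in $anticore(G)$. That is exactly why condition (2) retains the separate requirement $\gamma(G_v+u)=\gamma(G)$ supplied by Theorem \ref{vinanticore}. Modulo this observation, the argument is a routine translation between the three partial characterizations.
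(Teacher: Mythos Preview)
Your argument is correct and is precisely the intended one: the paper itself gives no detailed proof but simply states that the corollary follows from Theorems~\ref{vinanticore} and~\ref{vincore}, and your case split on $V^+,V^0,V^-$ together with Lemma~\ref{V-isol} and Remark~\ref{ACV-} is exactly how one unpacks that. The only implicit fact you use without comment is that $\gamma(G_v+u)\in\{\gamma(G),\gamma(G)+1\}$, so that $v\notin anticore(G)$ is genuinely equivalent to $\gamma(G_v+u)=\gamma(G)$; this is immediate from Remark~\ref{G+u} and the paper takes it for granted as well.
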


\begin{proposition}\label{mdsinP}
Let $\cal C$ be a class of graphs such that the minimum dominating set problem is polynomial time solvable for $G$ and $G_v+u$.  Given a graph $G=(V,E)\in \cal C$ and $v\in V$ the problem of deciding if either $v\in core(G)$ or  $v\in anticore(G)$ or  $v\in corona(G)-core(G)$ is in $P$.
\end{proposition}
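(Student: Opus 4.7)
The plan is to turn the structural characterizations proved above into an explicit polynomial-time decision procedure. Since $\{core(G), corona(G)-core(G), anticore(G)\}$ partitions $V$, it suffices to test first whether $v\in anticore(G)$ and, if not, whether $v\in core(G)$; the third case is then settled by default.

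First I would invoke the MDS oracle to compute $\gamma(G)$ and $\gamma(G_v+u)$, where $u$ is a fresh pendant vertex attached to $v$. By Theorem \ref{vinanticore}, $v\in anticore(G)$ iff $\gamma(G_v+u)=\gamma(G)+1$, so a single comparison decides this case.

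If $v\notin anticore(G)$, I apply Theorem \ref{vincorealg}. Checking whether $v$ is isolated is immediate. Otherwise, compute $\gamma(G-v)$ and compare with $\gamma(G)$ to test condition 2. If $\gamma(G-v)=\gamma(G)$, condition 3 requires testing, for each $u\in N(v)$, whether $u\in anticore(G-v)$; by Theorem \ref{vinanticore} applied to the graph $G-v$, this reduces to computing $\gamma((G-v)_u+w)$ and comparing with $\gamma(G-v)+1$. If every $u\in N(v)$ passes this test, then $v\in core(G)$; otherwise, Corollary \ref{vinK} yields $v\in corona(G)-core(G)$.

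The total number of MDS computations is at most $|N(v)|+3 = O(n)$, carried out on $G$, on $G-v$, on $G_v+u$, and on graphs of the form $(G-v)_u+w$, each of size $O(n)$. Each of these is polynomial by the stated hypothesis on $\cal C$. The only subtle point, and the main obstacle to a clean statement, is implicit closure of $\cal C$: we also need polynomial MDS on $G-v$ and on $(G-v)_u+w$. This is automatic for the hereditary classes targeted later in the paper (chordal, claw-free, triangle-free graphs closed under adding pendants), so under the natural reading of the hypothesis the $O(n)$-oracle-call bound goes through and the procedure runs in polynomial time.
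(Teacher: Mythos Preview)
Your proof is correct and follows essentially the same approach as the paper's: use Theorem~\ref{vinanticore} to test membership in $anticore(G)$, then Theorem~\ref{vincorealg} to test membership in $core(G)$, defaulting to $corona(G)-core(G)$ otherwise. You are also right to flag the implicit closure assumption (the procedure needs MDS to be polynomial on $G-v$ and on $(G-v)_u+w$, not just on $G$ and $G_v+u$), a point the paper's proof glosses over but which holds for the hereditary classes treated later.
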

\begin{proof}
Let $G\in \cal C$. To compute $k$ the minimum size of a dominating set can be done in polynomial time.  It follows from Theorem \ref{vinanticore} that checking if $v\in anticore(G)$ is polynomial. Clearly checking if $v$ satisfies item 1 or 2 of Theorem \ref{vincorealg} is polynomial, using Theorem \ref{vinanticore} checking  item 3 is polynomial. So deciding if either $v\in core(G)$ or  $v\in anticore(G)$ or  $v\in corona(G)-core(G)$ is in $P$.
\end{proof}

\section{Presence of vertices in $core(G)\cap V^0$ for some classes of graphs}\label{classes}
As stated by Theorem \ref{vincore} there are exactly three types of vertices in $core(G)$. Clearly when $G$ is connected and has at least two vertices an isolated vertex don't exists. In this section we show that for some classes of connected graphs with at least two vertices we have  $core(G)=V^+$. In order to draw the borderline between these classes and the classes where it may exists a vertex in $core(G)\cap V^0$, we exhibit some graphs where it exists a vertex $v\in core(G)\cap V^0$. These graphs are the {\it closest}, in some sense, to the classes we study. We also give some complexity results for some subclasses of chordal graphs. 

First we give some general properties we will use later. 

\begin{fact}\label{cliqueneigh}
Let $G=(V,E)$ be a connected graph with at least two vertices.  If $v\in V$ is such that $G[N[v]]$ is a clique then $v\not \in core(G)$. 
\end{fact}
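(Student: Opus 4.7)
The plan is to argue that membership of $v$ in every $\gamma$-set is incompatible with $G[N[v]]$ being a clique, via a single exchange. The key observation I would exploit is that, under the clique hypothesis, any neighbor $w$ of $v$ satisfies $N[v]\subseteq N[w]$, so $w$ dominates everything that $v$ dominates in $G$.

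Concretely, I would start from an arbitrary $\gamma$-set $S$ of $G$. If $v\notin S$ there is nothing to prove, so assume $v\in S$. Since $G$ is connected and has at least two vertices, $v$ is not isolated and admits at least one neighbor $w$, and I would form the swapped set $S'=(S\setminus\{v\})\cup\{w\}$.

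The verification that $S'$ is a dominating set of $G$ is short: for $x\in N[v]$, either $x=w$ or $xw\in E$ by the clique property, so $x$ is dominated by $w\in S'$; for $x\notin N[v]$, the dominator of $x$ chosen in $S$ cannot be $v$ (otherwise $x\in N[v]$), hence it survives in $S'$. I would then split on whether $w\in S$: if $w\in S$, then already $S\setminus\{v\}$ dominates $G$, contradicting the minimality of $|S|=\gamma(G)$; if $w\notin S$, then $|S'|=|S|=\gamma(G)$ and $v\notin S'$, which exhibits a $\gamma$-set avoiding $v$ and gives $v\notin core(G)$.

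There is no real obstacle here—the whole argument is a one-shot swap of $v$ for any of its neighbors. The only point that requires a moment of care is the observation that no vertex outside $N[v]$ could have been dominated by $v$ in $S$, which is immediate from the definition of the closed neighborhood; everything else follows mechanically from the clique hypothesis.
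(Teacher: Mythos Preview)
Your proposal is correct and follows essentially the same approach as the paper: both argue that swapping $v$ for any neighbor $u$ in a $\gamma$-set $S$ yields another $\gamma$-set not containing $v$, using that $N[v]\subseteq N[u]$ when $G[N[v]]$ is a clique. The paper's proof is a one-liner that writes $(S-v)\cup\{u\}$ without further comment, whereas you spell out the domination check and the case split $w\in S$ versus $w\notin S$; this extra care is harmless and arguably cleaner, but the underlying idea is identical.
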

\begin{proof}
Let $u\in N(v)$. If there exists $S$ a $\gamma$-set of $G$ then $(S-v)\cup{u}$ is another $\gamma$-set.
\end{proof}

\begin{lemma}\label{connect}
Let $v$ be a cut-vertex of $G$ such that $v\in core(G)$. If for any connected component $C$ of $G-v$ the vertices of $C\cap N(v)$ induces a clique then $v\in V^+$.
\end{lemma}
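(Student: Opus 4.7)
The plan is to argue by contradiction, assuming $v \notin V^+$. Because $v$ is a cut-vertex it is not isolated, and since $v\in core(G)$, Lemma~\ref{V-isol} rules out $v\in V^-$; hence the only remaining case to refute is $v\in V^0$.

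Assume then $v\in V^0$. Theorem~\ref{vincore} (item 3) yields $\gamma(G-v)=\gamma(G)$ and, for every $T\subseteq V$ of size $\gamma(G)$ dominating $G-v$, $T\cap N[v]=\emptyset$. Let $C_1,\ldots,C_k$ be the components of $G-v$ and set $K_i=V(C_i)\cap N(v)$, a clique by hypothesis. Writing $\gamma_i=\gamma(C_i)$, one has $\gamma(G-v)=\sum_i\gamma_i$; hence for any such $T$, each restriction $T_i=T\cap V(C_i)$ is a $\gamma_i$-set of $C_i$ and satisfies $T_i\cap K_i=\emptyset$. Moreover, if some $C_i$ admitted a $\gamma_i$-set meeting $K_i$, substituting it in $T$ would produce a $\gamma(G)$-sized dominating set of $G-v$ intersecting $N[v]$, contradicting the avoidance. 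Denoting by $\gamma_i^K$ the minimum size of a dominating set of $C_i$ that contains a vertex of $K_i$, one thus obtains $\gamma_i^K>\gamma_i$ for every $i$.

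Next I would extract a second formula for $\gamma(G)$ from $v\in core(G)$. Since every mds of $G$ contains $v$, and by the cut-vertex property every vertex of $V(C_i)\setminus K_i$ must be dominated from inside $V(C_i)$, one gets $\gamma(G)=1+\sum_i\beta_i$, where $\beta_i$ denotes the minimum size of a subset of $V(C_i)$ dominating $V(C_i)\setminus K_i$ in $C_i$; the upper bound is immediate since $\{v\}\cup\bigcup_i D_i$ with $|D_i|=\beta_i$ dominates $G$ (the vertex $v$ takes care of $\bigcup_i K_i$). Comparing with $\gamma(G)=\gamma(G-v)=\sum_i\gamma_i$ yields $\sum_i(\gamma_i-\beta_i)=1$, and since each $\gamma_i\ge\beta_i$, a unique index $i_0$ satisfies $\beta_{i_0}=\gamma_{i_0}-1$.

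The finishing blow uses the clique structure of $K_{i_0}$. Choose $D\subseteq V(C_{i_0})$ of size $\gamma_{i_0}-1$ dominating $V(C_{i_0})\setminus K_{i_0}$. Since $|D|<\gamma_{i_0}$, $D$ does not dominate $C_{i_0}$, so some vertex is missed; necessarily it lies in $K_{i_0}$, say $k$. As $K_{i_0}$ is a clique, $k$ dominates all of $K_{i_0}$, so $D\cup\{k\}$ is a dominating set of $C_{i_0}$ of size $\gamma_{i_0}$ meeting $K_{i_0}$, which gives $\gamma_{i_0}^K\le\gamma_{i_0}$, contradicting $\gamma_{i_0}^K>\gamma_{i_0}$. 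The only real obstacle is the bookkeeping establishing the two identities $\gamma(G-v)=\sum_i\gamma_i$ and $\gamma(G)=1+\sum_i\beta_i$; once they are in place, the ``complete $D$ by a clique vertex'' step closes the argument.
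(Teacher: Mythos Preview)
Your proof is correct. Both your argument and the paper's proceed by contradiction (assume $v\in V^0$), invoke Theorem~\ref{vincore}(3), decompose along the components $C_1,\dots,C_k$ of $G-v$, and finish by exploiting the clique hypothesis on $K_i=C_i\cap N(v)$ to adjoin a single vertex and reach a contradiction. The organization differs, however. The paper takes one concrete mds $D$ of $G$ (with $v\in D$) and one mds $S$ of $G-v$ (with $S\cap N[v]=\emptyset$), compares $D_i=D\cap C_i$ with $S_i=S\cap C_i$, and by pigeonhole finds some $i$ with $|D_i|<|S_i|$; it then directly builds a $\gamma$-set of $G$ avoiding $v$ as $D_1\cup\{v_1\}\cup S_2\cup\cdots\cup S_k$ for any $v_1\in K_1$ (or obtains a smaller dominating set outright if $D_1$ already meets $K_1$). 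Your route instead introduces the auxiliary parameters $\beta_i$ and $\gamma_i^K$, proves the two counting identities $\gamma(G-v)=\sum_i\gamma_i$ and $\gamma(G)=1+\sum_i\beta_i$, isolates the unique index $i_0$ with $\beta_{i_0}=\gamma_{i_0}-1$, and then shows $\gamma_{i_0}^K\le\gamma_{i_0}$, contradicting the avoidance condition. The paper's version is shorter and sidesteps the bookkeeping you flag as ``the only real obstacle''; your version makes the structural reason for the contradiction more explicit and would adapt more readily to variants of the statement.
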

\begin{proof}
Let $v\in core(G)$. Suppose $v\in V^0$. From Theorem \ref{vincore}.3, $\gamma(G-v)=\gamma(G)$ and every subset $S,\vert S\vert=\gamma(G),$ that dominates $G-v$ is such that $S\cap N[v]=\emptyset$. Let $D$ be a mds of $G$. Let $C_1,\ldots,C_k,k\ge 2,$ be the connected components of $G-v$. Let $S_i=S\cap C_i,D_i=D\cap C_i,1\le i\le k$. Since $v\in D$ it exists $i$ such that $\vert D_i\vert <\vert S_i\vert$. W.l.o.g. let $i=1$. $D_1$ dominates $C_1-(C_1\cap N[v])$. If $D_1\cap N[v]\ne \emptyset$ then  $D_1\cup S_2\cup\cdots\cup S_k$ dominates $G$ (recall $C_1\cap N[v]$ is a clique) but $\vert D_1\vert+ \vert S_2\vert+\cdots\vert S_k\vert<\gamma(G)$. When $D_1\cap N[v]=\emptyset$ let $v_1\in C_1\cap N[v]$. Now $\vert D_1\cup\{v_1\}\vert\le \vert S_1\vert$, so  $D_1\cup\{v_1\}\cup S_2\cup\cdots\cup S_k$ is a $\gamma$-set of $G$ which not contains $v$.

From Theorem \ref{vincore} $v\not\in V^-$, thus $v\in V^+$.
\end{proof}

Here we correct a published error concerning the cut vertices.  
Proposition 5 in \cite{Bauer} states: {\it If a cutpoint $v$ of $G$ is in every minimum dominating set for $G$, then $\gamma(G-v)>\gamma(G)$}, that is a cut-vertex $v\in core(G)$ is such that $v\in V^+$. This is a mistake, the
Figure \ref{CutV0} shows a graph with a cut-vertex in  $core(G)\cap V^0$.
\begin{figure}[htbp]
\begin{center}
\includegraphics[width=8cm, height=4cm, keepaspectratio=true]{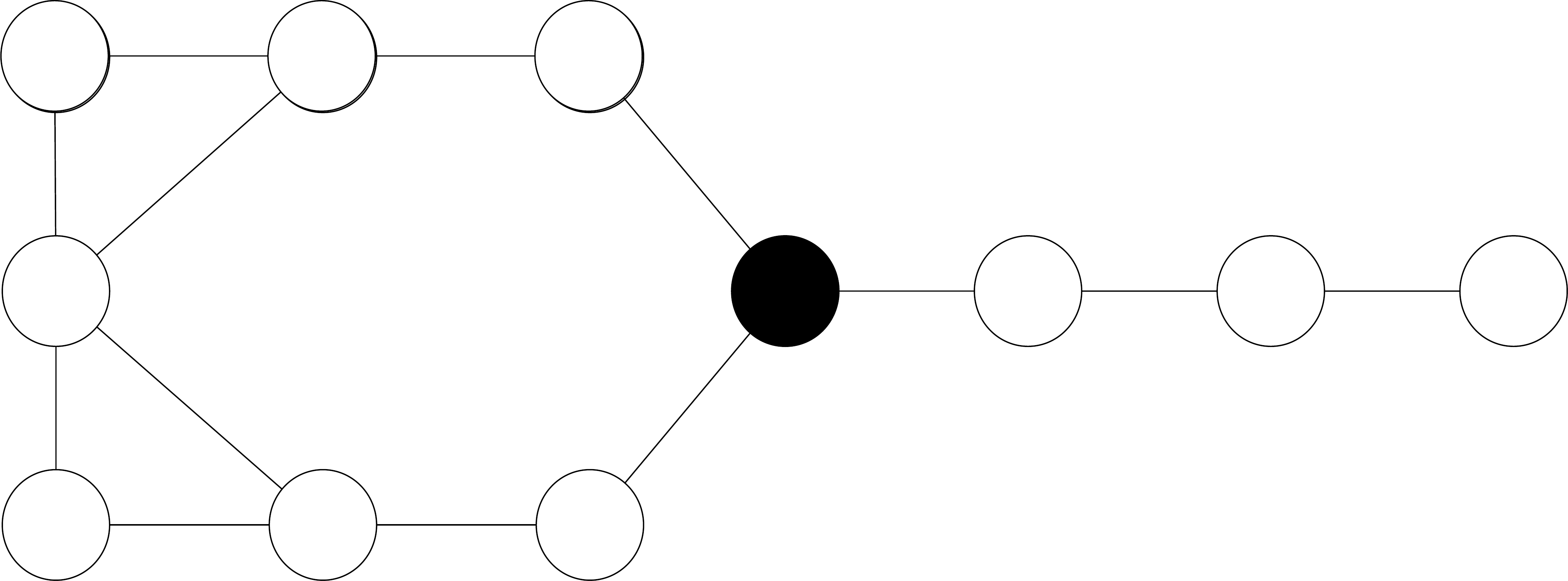}
\end{center}
\caption{The black cut-vertex is in $core(G)\cap V^0$.}
\label{CutV0}
\end{figure}

\begin{lemma}\label{CordV+}
Let $G$ be a connected graph with at least two vertices and $v\in core(G)$. Let $C_1,\ldots,C_k$ be the connected components of $G-N[v]$. Let $N_i$ be the set of neighbors of $v$ with a neighbor in $C_i,1\le i\le k$. If $G[N_i]$ is a clique then $v\in V^+$.
\end{lemma}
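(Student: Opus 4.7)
The proof is by contradiction, closely following the template of Lemma~\ref{connect}. Suppose $v \notin V^+$. Since $G$ is connected with at least two vertices, $v$ is not isolated, so by Lemma~\ref{V-isol} (combined with $v \in core(G)$) we must have $v \in V^0$. Apply Theorem~\ref{vincore}.3 to obtain a set $S$ with $|S| = \gamma(G)$ dominating $G - v$ and $S \cap N[v] = \emptyset$; in particular $S \subseteq \bigcup_i V(C_i)$. Since each $u \in N(v)$ is dominated by $S$ and $S \cap N[v] = \emptyset$, $u$ must have a neighbor in some $V(C_i)$, whence $u \in N_i$. Thus $N(v) = \bigcup_i N_i$ (no neighbor of $v$ has all its neighbors inside $N[v]$).

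Fix an mds $D$ of $G$; since $v \in core(G)$, we have $v \in D$. Decompose $D = \{v\} \sqcup D_N \sqcup \bigsqcup_i D_i$ with $D_N = D \cap N(v)$ and $D_i = D \cap V(C_i)$, and set $S_i = S \cap V(C_i)$ and $D_{N,i} = D \cap N_i$. The identity $|D| = |S| = \gamma(G)$ gives the counting relation $\sum_i (|S_i| - |D_i|) = 1 + |D_N|$, so some index---call it $1$---satisfies $|S_1| \ge |D_1| + 1$.

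The key construction is $D' = D_1 \cup D_{N,1} \cup \{u^*\} \cup \bigcup_{j \ge 2} S_j$, where $u^* \in N_1$ (nonempty because $G$ is connected and $C_1$ is a component of $G - N[v]$): if $D_{N,1} \ne \emptyset$ we take $u^* \in D_{N,1}$, otherwise $u^*$ is any chosen vertex of $N_1$. Using that each $N_i$ is a clique: $u^*$ dominates $N_1 \cup \{v\}$; $D_1 \cup D_{N,1}$ dominates $V(C_1)$ (these are the only $D$-vertices that can reach $V(C_1)$); $S_j$ dominates $V(C_j)$ for $j \ge 2$; and each $u \in N_j$ with $j \ge 2$ is dominated in $D'$---either $u \in N_1$ and hence dominated by $u^*$ through the clique $N_1$, or $u \notin N_1$ in which case every $S$-dominator of $u$ lies in some $S_\ell$ with $\ell \ne 1$, hence in $D'$. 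Thus $D'$ dominates $G$ and does not contain $v$, which contradicts $v \in core(G)$ once we bound $|D'| \le \gamma(G)$. The main technical obstacle is precisely this size bound: when $D_{N,1} = \emptyset$ it is immediate from $|S_1| \ge |D_1| + 1$ (and $D_1$ alone already dominates $V(C_1)$), but when $D_{N,1} \ne \emptyset$ and the cliques $N_i$ overlap on $D_N$ one must refine the counting by charging each $u \in D_N$ to a single $i$ with $u \in N_i$, so that $\sum(|S_i| - |D_i| - |D_N^{\,i}|) = 1$ forces some index $i$ to satisfy $|S_i| \ge |D_i| + |D_{N,i}|$, providing a pivot for which the construction has size at most $\gamma(G)$.
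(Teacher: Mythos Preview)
Your overall strategy matches the paper's: assume $v\in V^0$, use Theorem~\ref{vincore}.3 to obtain $S$ with $S\cap N[v]=\emptyset$, take an mds $D\ni v$, find an index on which $D$ is ``light'' relative to $S$, and splice $D$'s piece on that index with the remaining $S_j$'s together with one vertex of $N_1$ to dominate $G$ without $v$. You are in fact more careful than the paper in separating $D\cap C_i$ from $D\cap N_i$ and in observing that $N(v)=\bigcup_i N_i$; and you correctly isolate the delicate point the paper glosses over, namely what happens when $D$ meets $N(v)$ and the cliques $N_i$ overlap.

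However, the charging argument you sketch does not close that gap. Assigning each $u\in D_N$ to a single index yields partitions $D_N^{\,i}$ with $\sum_i(|S_i|-|D_i|-|D_N^{\,i}|)=1$, hence some $i$ with $|S_i|\ge |D_i|+|D_N^{\,i}|+1$. But your constructed set $D'$ must contain the \emph{full} $D_{N,1}=D\cap N_1$ in order to dominate $C_1$: a vertex of $C_1$ may be dominated in $D$ solely by some $d\in D_{N,1}$ that you charged to another index, so replacing $D_{N,1}$ by $D_N^{\,1}$ would break domination. Consequently the size bound you actually need is $|S_1|\ge |D_1|+|D_{N,1}|$, and since $|D_N^{\,1}|\le |D_{N,1}|$ the charging inequality does not deliver it. In the paper's notation (where $D_i=D\cap(C_i\cup N_i)$) this is precisely the assertion ``Since $v\in D$ it exists $i$ such that $|D_i|<|S_i|$,'' which the paper states without further justification; you have located the right pressure point but not yet supplied the missing argument.
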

\begin{proof}
If there is no such component $C_i$, i.e. $G=N[v]$, then from Theorem \ref{vincore} $v\in V^+$.

When a component $C_i$ exists,  from Theorem \ref{vincore}, $\gamma(G-v)=\gamma(G)$ and every subset $S,\vert S\vert=\gamma(G),$ that dominates $G-v$ is such that $S\cap N[v]=\emptyset$. Let $D$ be a mds of $G$. Let $N_i$ be the set of neighbors of $v$ with a neighbor in $C_i,1\le i\le k$.  Let $S_i=S\cap (C_i\cup N_i),D_i=D\cap (C_i\cup N_i),1\le i\le k$.  

Since $v\in D$ it exists $i$ such that $\vert D_i\vert <\vert S_i\vert$. W.l.o.g. let $i=1$. $D_1$ dominates $C_1$. $S_2\cup\cdots\cup S_k$ dominates $C_2\cup\cdots\cup C_k$ and $N(v)-N_1$. Let $v_1\in N_1$. 
Thus $D_1\cup \{v_1\}\cup S_2\cup\cdots\cup S_k$ dominates $G$ since $N_1\cup\{v\}$ is a clique. So $v\in V^+$.
\end{proof}

\subsection{Chordal graphs}\label{chordal}
We recall that a graph is chordal if and only if every cycle on length at most four contains a chord. We show that in a chordal graph without isolated vertices if a vertex is in $core(G)$ then it is in $V^+$. Moreover we show that determining the status of a vertex can be done in linear time for trees and interval graphs, two subclasses of chordal graphs. We recall that the Minimum Dominating Set problem is $NP$-complete for chordal graphs \cite{Interval}.
\begin{prop}\label{chordcore}
Let $G=(V,E)$ be a connected chordal graph with at least two vertices. $v\in core(G)$ if and only if $\gamma(G-v)>\gamma(G)$.
\end{prop}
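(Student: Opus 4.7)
The direction $(\Leftarrow)$ is immediate: if $\gamma(G-v)>\gamma(G)$ then $v\in V^+$, and item~2 of Theorem~\ref{vincore} puts $v$ in $core(G)$. For the converse, assume $v\in core(G)$. Because $G$ is connected with at least two vertices, $v$ is not isolated, so Lemma~\ref{V-isol} excludes $v\in V^-$. Since $V=V^-\cup V^0\cup V^+$, it suffices to rule out $v\in V^0$, which I plan to do by invoking Lemma~\ref{CordV+}. Its hypothesis requires that for every connected component $C_i$ of $G-N[v]$, the set $N_i=\{u\in N(v) : u\text{ has a neighbor in }C_i\}$ induces a clique; once this is established, Lemma~\ref{CordV+} directly yields $v\in V^+$.

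The crux is therefore to show, using chordality, that each such $N_i$ is a clique. Fix $i$ and let $u_1,u_2\in N_i$. Choose a triple $(c_1,c_2,P)$ with $c_1\in N(u_1)\cap C_i$, $c_2\in N(u_2)\cap C_i$, and $P$ a path from $c_1$ to $c_2$ inside $C_i$, minimizing $|E(P)|$. This yields a cycle $\mathcal{C}: v-u_1-c_1-P-c_2-u_2-v$ of length at least $4$ (reducing to the $4$-cycle $v-u_1-c_1-u_2-v$ when $c_1=c_2$). Chordality forces $\mathcal{C}$ to have a chord. Chords incident to $v$ are impossible because every vertex of $\mathcal{C}$ other than $v,u_1,u_2$ lies in $C_i\subseteq V\setminus N[v]$; chords inside $P$ would contradict the shortest-path choice of $P$; and a chord from $u_1$ to any vertex of $P$ other than $c_1$ would allow us to replace $c_1$ with that vertex, strictly shortening $P$ and contradicting minimality (and symmetrically for $u_2$). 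The only remaining possibility is $u_1u_2$, whence $u_1u_2\in E$.

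With the clique property of each $N_i$ in hand, Lemma~\ref{CordV+} delivers $v\in V^+$, closing the converse. The main obstacle is the exhaustiveness of the chord case analysis; in particular, it is essential to minimize over all triples $(c_1,c_2,P)$ simultaneously (rather than fixing $c_1,c_2$ first and then choosing $P$), so that the ``crossing'' chords $u_1c$ and $u_2c$ can all be eliminated through the minimality argument.
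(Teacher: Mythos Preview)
Your proof is correct and follows essentially the same route as the paper: both reduce the converse to Lemma~\ref{CordV+} by showing that every $N_i$ is a clique, arguing that a non-edge in $N_i$ together with a shortest path through $C_i$ would produce a chordless cycle of length at least four, contradicting chordality. Your chord case analysis (and the care taken to minimize over triples $(c_1,c_2,P)$) is more explicit than the paper's terse ``$G[P\cup\{v\}]$ is a hole'' line, but the underlying argument is identical.
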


\begin{proof}
From Theorem \ref{vincore} if  $\gamma(G-v)>\gamma(G)$ then $v\in core(G)$. 
Now let $v\in core(G)$. Suppose that $v\in V^0$. From Theorem \ref{vincore} $N_2(v)\ne \emptyset$.

Suppose  it exists $N_i$ as defined in Lemma \ref{CordV+} that is not a clique: it exists $s,t\in N_i$ such that $st$ is a non-edge. So there is a path from $s$ to $t$ in $C _i$. Let $P$ be a such shortest path. Then $G[P\cup\{v\}]$ is a hole, contradiction. Thus each $N_i$ is a clique.
It follows from Lemma \ref{CordV+} $v\in V^+$.
\end{proof}

From Property \ref{chordcore}, Theorem \ref{vinanticore} and corollary \ref{vinK} we obtain the following.

\begin{theorem}\label{corechord}
Let $G=(V,E)$ be a chordal with at least two vertices.
\begin{itemize}
\item $v\in core(G)$ if and only if $v\in V^+$;
\item $v \in anticore(G)$ iff $\gamma(G_v+u)= \gamma(G)+1$;
\item $v\in corona(G)-core(G)$ iff $v\in V^-$ or $v\in V^0$ and $\gamma(G_v+u)= \gamma(G)$.
\end{itemize}
\end{theorem}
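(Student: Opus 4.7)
The plan is to derive each of the three bullets by combining results already proved in the excerpt. Throughout I would treat the statement as applying to connected chordal graphs (the disconnected case is problematic because an isolated vertex lies in $core(G)\cap V^-$ but not in $V^+$, which would break the first bullet).

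For the first bullet, I would simply observe that $v\in V^+$ is by definition the condition $\gamma(G-v)>\gamma(G)$, and Property \ref{chordcore} already states that in a connected chordal graph with at least two vertices this is equivalent to $v\in core(G)$. So this bullet is just a renaming. The second bullet is Theorem \ref{vinanticore} verbatim, which is valid for every graph and therefore in particular for chordal ones; no work is needed.

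The only bullet that requires a short argument is the third. I would begin from Corollary \ref{vinK}, which for an arbitrary graph characterizes $corona(G)-core(G)$ as the union of (a) non-isolated vertices of $V^-$ and (b) vertices $v\in V^0$ for which some set $S$ of size $\gamma(G)$ dominates $G-v$ with $S\cap N[v]\neq\emptyset$ and $\gamma(G_v+u)=\gamma(G)$. Under the chordal hypothesis with at least two vertices and connectivity, case (a) immediately simplifies to $v\in V^-$ because there are no isolated vertices. For case (b), I would invoke Property \ref{chordcore} in the contrapositive: $v\in V^0$ forces $v\notin core(G)$, so there is a $\gamma$-set $S$ of $G$ avoiding $v$; that $S$ has cardinality $\gamma(G)$, dominates $G-v$, and must contain a neighbor of $v$ (since $v$ is dominated), so $S\cap N[v]\neq\emptyset$. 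Hence the existence clause in (b) is automatic, and (b) reduces to ``$v\in V^0$ and $\gamma(G_v+u)=\gamma(G)$'', which is precisely what the theorem states.

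The main obstacle, though minor, is the verification in the previous paragraph that the quantifier on $S$ in Corollary \ref{vinK}.2 can be discharged for free in the chordal case; this is where Property \ref{chordcore} is essential, since in a general graph one can have $v\in V^0\cap core(G)$ and the existence clause is then nontrivial. Once this observation is in place, the three bullets follow purely by rewriting, and it is worth stating the connectivity assumption explicitly so that no isolated vertex spoils the first bullet.
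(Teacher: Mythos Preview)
Your approach is correct and coincides with the paper's: the paper does not give a written proof but simply records that the theorem follows ``from Property \ref{chordcore}, Theorem \ref{vinanticore} and Corollary \ref{vinK}'', which are exactly the three ingredients you invoke, and your handling of the third bullet (discharging the existence clause of Corollary \ref{vinK}.2 via the contrapositive of Property \ref{chordcore}) is the intended reduction. Your remark that connectivity (or at least the absence of isolated vertices) must be assumed is well taken, since Property \ref{chordcore} is stated only for connected graphs and an isolated vertex would lie in $core(G)\cap V^-$ rather than $V^+$.
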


For the special case where $G$ is a tree we know the following.
Computing a minimum dominating set is linear for the class of trees \cite{Cockayne} and $G_v+u$ is a tree. So we deduces the following.
\begin{rmk}\label{treealg}
Let $G$ be a tree and $v$ be a vertex of $G$. Deciding if either $v\in core(G)$ or  $v\in anticore(G)$ or  $v\in corona(G)-core(G)$ can be done in  linear time.
\end{rmk}

Let us consider the case where $G$ is an interval graph.

\begin{proposition}
Let $G$ be an interval graph and $v$ be a vertex of $G$. Deciding if either $v\in core(G)$ or  $v\in anticore(G)$ or  $v\in corona(G)-core(G)$ can be computed in time $O(\vert V\vert+\vert E\vert)$.
\end{proposition}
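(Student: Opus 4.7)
The plan is to reduce the question to a constant number of minimum dominating set computations on interval graphs, and then invoke a known linear-time algorithm. By Theorem \ref{corechord}, since every interval graph is chordal, the status of $v$ is determined entirely by the three values $\gamma(G)$, $\gamma(G-v)$, and $\gamma(G_v+u)$: namely $v\in core(G)$ iff $\gamma(G-v)>\gamma(G)$; $v\in anticore(G)$ iff $\gamma(G_v+u)=\gamma(G)+1$; and otherwise $v\in corona(G)-core(G)$. So it suffices to compute these three values in time $O(|V|+|E|)$.

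First I would verify that the class of interval graphs is closed under the two operations we need. Deleting $v$ clearly yields an interval graph (restrict the interval representation). For $G_v+u$, take any interval representation of $G$ and add a small interval $I_u$ lying strictly inside the interval representing $v$ and meeting no other interval; this can be done because, although $v$'s interval may be shared with neighbors, we can always insert a point in $v$'s interval that is not covered by any other endpoint, and surround it with a sufficiently short interval. Hence $G_v+u$ is an interval graph with $|V|+1$ vertices and $|E|+1$ edges.

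Next I would appeal to the well-known fact that a minimum dominating set, and in particular its cardinality, can be computed for an interval graph in time $O(|V|+|E|)$ (for instance using the greedy sweep over a sorted interval representation; an interval representation itself can be produced from $G$ in linear time). Applying this algorithm to $G$, to $G-v$, and to $G_v+u$ yields $\gamma(G)$, $\gamma(G-v)$, and $\gamma(G_v+u)$ in total time $O(|V|+|E|)$.

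Finally I would conclude by comparing the three values according to Theorem \ref{corechord} in constant time, which decides the membership of $v$ among $core(G)$, $anticore(G)$, and $corona(G)-core(G)$. The only point that requires care is the claim that $G_v+u$ stays interval and that an interval representation of it can be obtained in linear time from one of $G$; everything else is a direct invocation of prior results.
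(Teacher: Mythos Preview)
Your reduction to three $\gamma$-computations via Theorem~\ref{corechord} is fine, and so is the observation that $G-v$ is interval. The gap is the claim that $G_v+u$ is again an interval graph. This is false in general, and your justification (``insert a point in $v$'s interval that is not covered by any other endpoint'') does not do what you need: the issue is not endpoints but whether some neighbour's interval \emph{covers} every point of $I_v$, and this may be unavoidable in \emph{every} interval representation of $G$.

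Concretely, take $V=\{v,a,b,x,y,w\}$ with intervals $w=[0,10]$, $x=[0,1]$, $a=[1,3]$, $v=[3,5]$, $b=[5,7]$, $y=[7,8]$; the resulting interval graph has $w$ universal and $x\!-\!a\!-\!v\!-\!b\!-\!y$ a path. In $G_v+u$ the triple $\{u,x,y\}$ is asteroidal: $x\!-\!w\!-\!y$ avoids $N[u]=\{u,v\}$, $u\!-\!v\!-\!a\!-\!x$ avoids $N[y]=\{y,b,w\}$, and $u\!-\!v\!-\!b\!-\!y$ avoids $N[x]=\{x,a,w\}$. Hence $G_v+u$ is not AT-free and therefore not an interval graph, so your appeal to a linear-time interval-graph domination algorithm for $G_v+u$ breaks down.

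The paper sidesteps exactly this obstacle by passing to the larger class of directed path graphs: if one hangs a new tree node off the right endpoint $b_v$ of $I_v$ and extends $v$'s path to that node while letting $u$ be the single new node, one gets a rooted-directed-path representation of $G_v+u$. The Booth--Johnson algorithm computes $\gamma$ in $O(|V|+|E|)$ on directed path graphs, and Proposition~\ref{mdsinP} then finishes the argument. You can repair your proof by making the same move.
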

\begin{proof}
Let $G=(V,E)$ be an interval graph and $v\in V$. The class of interval graph is a subclass of directed path graphs \cite{GraphCla}. It is easy to verify that $G_v+u$ is a directed path graph. From \cite{Interval} computing $\gamma(G')$ can be done in time $O(\vert V\vert+\vert E\vert)$ when $G'$ is a directed path graphs. Following Proposition \ref{mdsinP} determining the status of $v$ can be done in time $O(\vert V\vert+\vert E\vert)$.
\end{proof}

\subsection{Cographs}\label{cograph}
The class of cographs is also the class of $P_4$-free graphs, see  \cite{GraphCla}. If $G$ is cograph then it admits the following decomposition:

\begin{itemize}
\item  a vertex is a cograph;
\item if $G_1$ and $G_2$ are two cographs then $G_1+G_2$ is a cograph;
\item if $G_1$ and $G_2$ are two cographs then $G_1\times G_2$ is a cograph.
\end{itemize}
 
So if $G$ is a connected cograph with at least two vertices then $G=G_1\times G_2$ where $G_1,G_2$ are complete. 

\begin{proposition}\label{cographV0}
Let $G$ be a connected cograph with at least two vertices.  Then $0\le \vert core(G)\vert\le 1$. If $\vert core(G)\vert= 1$ then $core(G)=V^+$.
\end{proposition}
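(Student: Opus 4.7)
The plan is to exploit the join decomposition of connected cographs to first pin down $\gamma(G)\in\{1,2\}$, then dispose of each case by a short, direct argument.

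Using that $G$ is a connected cograph on at least two vertices, write $G=G_1\vee G_2$ (the join of two cographs). For any $v_1\in V(G_1)$ and $v_2\in V(G_2)$, the set $\{v_1,v_2\}$ dominates $G$, so $\gamma(G)\le 2$. I would split into the two possibilities.

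First take $\gamma(G)=2$. Then $G$ has no universal vertex; in particular, neither side of the join can be a single vertex (a singleton side would be universal in $G$), so $|V(G_1)|,|V(G_2)|\ge 2$. Given any $v\in V(G)$, say $v\in V(G_1)$, pick $v'\in V(G_1)\setminus\{v\}$ and any $w\in V(G_2)$; then $\{v',w\}$ still dominates $G$ (it is a pair from opposite sides of the join), hence is a $\gamma$-set avoiding $v$. The symmetric argument handles $v\in V(G_2)$, giving $core(G)=\emptyset$.

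Next take $\gamma(G)=1$. A $\gamma$-set is then a single universal vertex; $core(G)$ is the intersection of all singletons $\{u\}$ where $u$ is universal. If there are at least two universal vertices the intersection is empty, and if there is exactly one, call it $v$, then $core(G)=\{v\}$. In either case $|core(G)|\le 1$, so combined with the previous paragraph we obtain $0\le|core(G)|\le 1$.

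For the second assertion, suppose $|core(G)|=1$. By the case split we must be in the second case with a unique universal vertex $v$ and $core(G)=\{v\}$. To conclude $core(G)=V^+$, I would show $v\in V^+$ and invoke Theorem \ref{V+} (with the fact that $G$ connected on $\ge2$ vertices has no isolated vertex) to get $V^+\subseteq core(G)=\{v\}$. For $v\in V^+$, observe that in $G-v$ any vertex $u$ universal there would satisfy $N_G[u]=V$ (since $v$ was universal in $G$, every other vertex is adjacent to $v$), contradicting uniqueness of $v$; hence $G-v$ has no universal vertex and $\gamma(G-v)\ge 2>1=\gamma(G)$, so $v\in V^+$. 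The only mildly delicate point is this last observation that universality in $G-v$ forces universality in $G$, which uses precisely that $v$ itself was universal in $G$.
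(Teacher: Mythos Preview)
Your proof is correct and follows essentially the same line as the paper: both use the join decomposition $G=G_1\vee G_2$ to get $\gamma(G)\le 2$ and then dispose of the cases. The only cosmetic difference is that the paper splits cases according to whether $G_1,G_2$ are cliques, whereas you split directly on $\gamma(G)\in\{1,2\}$ and argue via universal vertices; your organization is arguably cleaner, and your verification that the unique universal vertex lies in $V^+$ (by showing $G-v$ has no universal vertex) is equivalent to the paper's observation that the remaining side is not a clique.
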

\begin{proof}
Let $G=G_1\times G_2$. Since $G$ is connected we have $\gamma(G)\le 2$ ($\{v_1,v_2\},v_1\in V_1,v_2\in V_2$ dominates $G$). If both $G_1$ and $G_2$ are not a clique we have $\gamma(G)= 2$ and any pair $\{v_1,v_2\}$ is a $\gamma$-set. So $core(G)=\emptyset$. If $G_1$ and $G_2$ are two cliques then $G$ is clique, so $\gamma(G)=1$ and $core(G)=\emptyset$. If $G_1$ is a clique and $G_2$ is not a clique then any vertex $v_1$ of $G_1$ is a mds, thus $\gamma(G)=1$. If $G_1$ contains more than one vertex $core(G)\ne\emptyset$. When $G_1=K_1$ then $\{v_1\}$ is  the unique $\gamma$-set, so $core(G)=\{v_1\}$. Since $G_2$ is not a clique $v_1\in V^+$.\end{proof}

This result is tight since Figure \ref{coreP5V0} shows a graph with a $P_5$ and a vertex in $core(G)\cap V^0$. Hence our result is tight for the class of cographs. Also we remark that this graph is bipartite.

\begin{figure}[htbp]
\begin{center}
 \includegraphics[width=5cm, height=4cm, keepaspectratio=true]{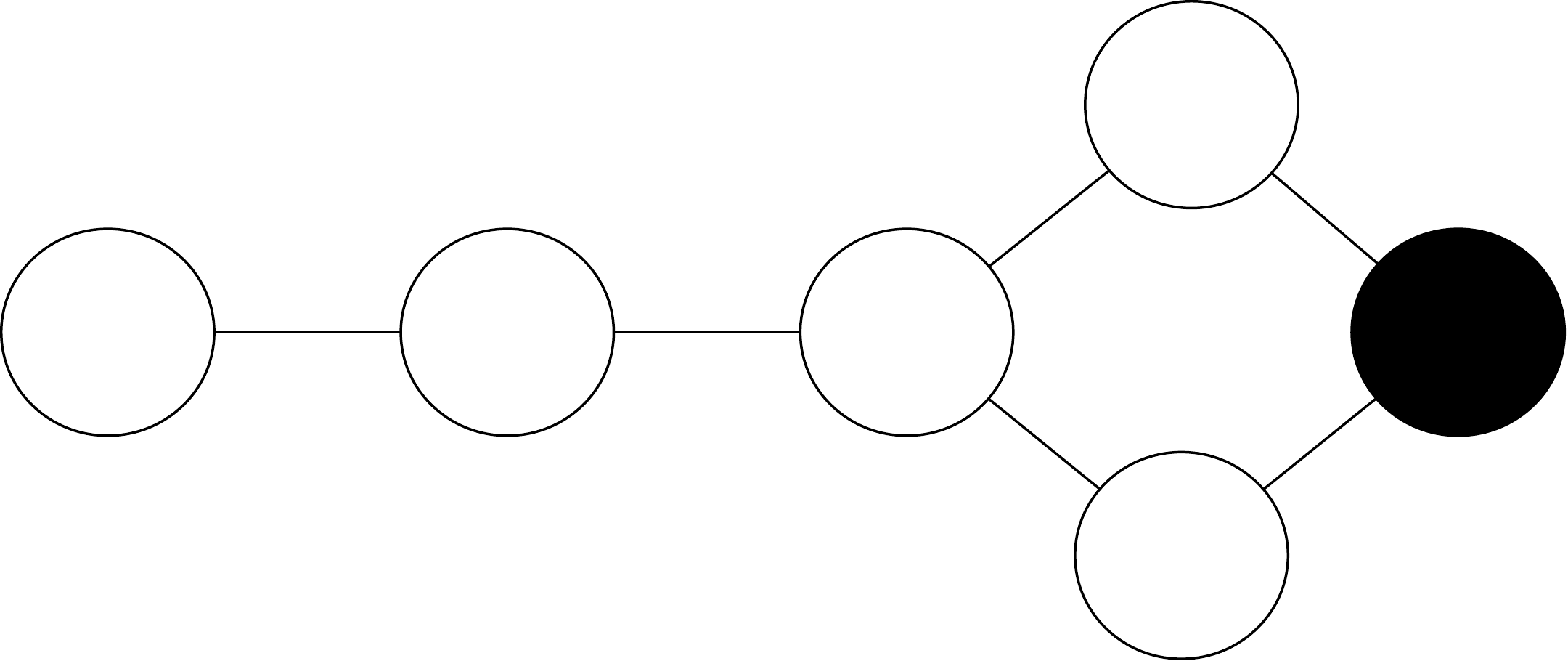}
\end{center}
\caption{The black vertex is in $core(G)\cap V^0$.}
\label{coreP5V0}
\end{figure}

\subsection{Claw-free graphs}\label{clawfree}
We are interested in connected claw-free graphs with at least two vertices. Given $H$  we  study the class of $(H,K_{1,3})-free$ graphs. We show that vertices $v$ with $v\in core(G)\cap V^+$ or $v\in core(G)\cap V^0$, may occurs when $H\in\{net, P_7\}$. In the case where $H\in\{bull, P_6\}$, or $H$ is a subgraph of the bull or $P_6$, we show that $v\in core(G)\cap V^0$ is not possible but there exist some graphs with $v\in core(G)\cap V^+$.\\

We give here a property of claw-free graphs proved by  Allan and Laskar \cite{Allan} that we will use later in several proofs: When a graph $G$ is $K_{1,3}$-free then $\gamma(G)=i(G)$. So in claw-free graphs there exists a $\gamma$-set which is an independant set.\\

Also recall that the class of line graphs is a subclass of claw-free graphs. Moreover if $G$ is diamond and odd hole free then $G$ is the line graph of a bipartite graph (see \cite{GraphCla}).
The Figure \ref{coreclawfree} shows a graph $G$ which is the line graph of a bipartite graph. Also $G$ is perfect and $(K_{1,3},K_4,net,diamond)$-free and $G$  has a vertex $v\in core(G)\cap V^0$.\\
 \begin{figure}[htbp]
\begin{center}
 \includegraphics[width=5cm, height=4cm, keepaspectratio=true]{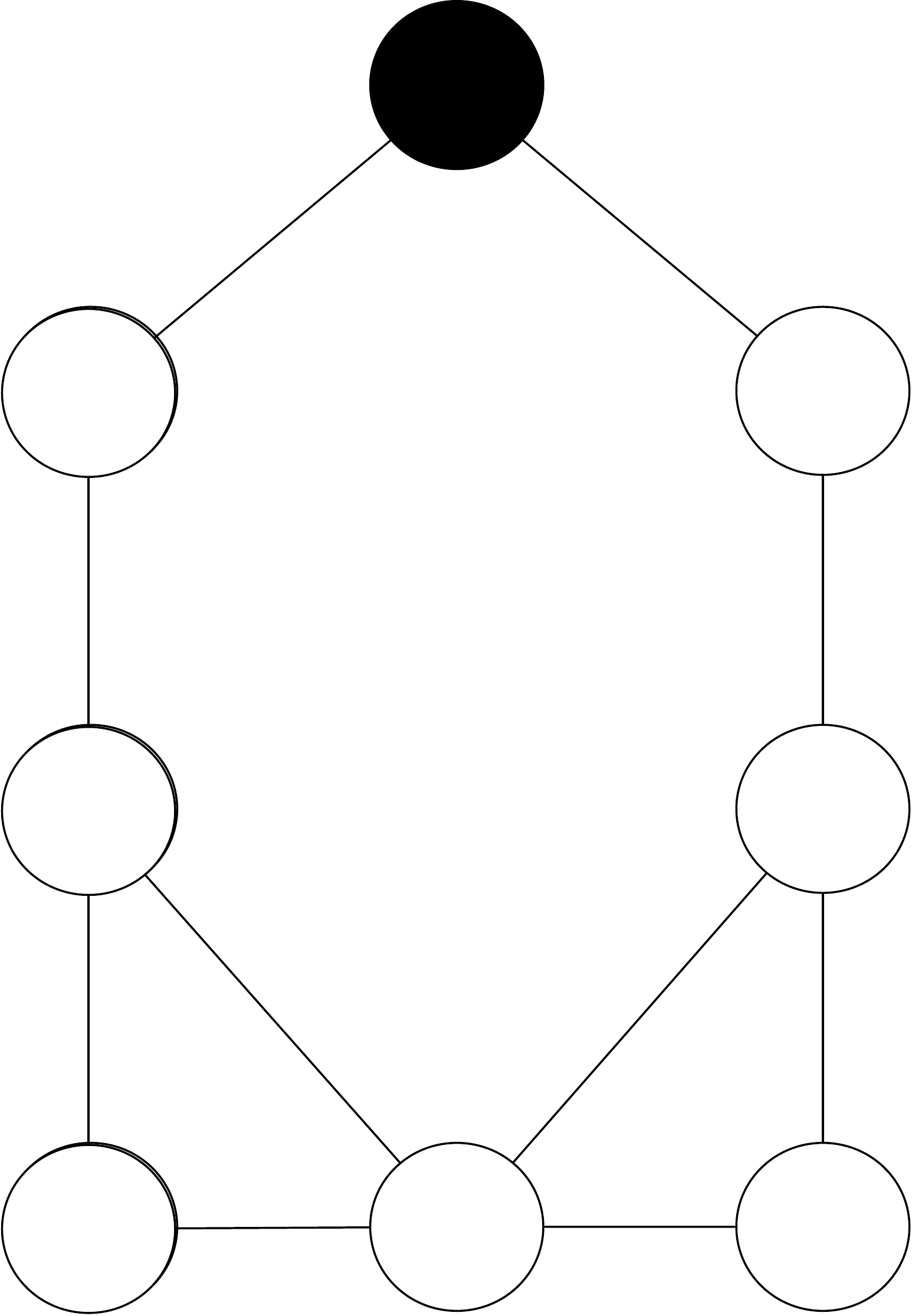}
\end{center}
\caption{The black vertex is in $core(G)\cap V^0$.}
\label{coreclawfree}
\end{figure}

We now give our results for the classes of $(claw,P_6)$-free and $(claw,bull)$-free graphs.

\begin{prop}\label{clawP6freecore}
Let $G=(V,E)$ be a connected $(claw,P_6)$-free graph with at least two vertices. $v\in core(G)$ if and only if $\gamma(G-v)>\gamma(G)$.
\end{prop}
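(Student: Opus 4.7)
The $(\Leftarrow)$ direction is immediate from Theorem \ref{vincore}. For $(\Rightarrow)$, suppose for contradiction that $v\in core(G)$ but $\gamma(G-v)=\gamma(G)$. Since $G$ is connected with at least two vertices, $v$ is not isolated, and Lemma \ref{V-isol} rules out $v\in V^-$; hence $v\in V^0$. The plan is to adapt the template of Proposition \ref{chordcore}: show that under these hypotheses every set $N_i$ from Lemma \ref{CordV+} is a clique, whence that lemma forces $v\in V^+$, a contradiction.

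Assume to the contrary that some $N_i$ contains non-adjacent vertices $s,t$, and let $P=s-w_1-\cdots-w_k-t$ be a shortest $s,t$-path with $w_1,\ldots,w_k\in C_i$. Exactly as in the chordal proof, $G[\{v\}\cup V(P)]$ is an induced cycle of length $k+3$. Where chordality ruled this out outright, here $P_6$-freeness gives only a quantitative bound: any induced $C_n$ with $n\geq 7$ contains an induced $P_6$ (delete any one vertex of the cycle), so $k+3\leq 6$, i.e., $k\in\{1,2,3\}$.

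It remains to eliminate each of $k=1,2,3$. I would invoke the result of Allan and Laskar \cite{Allan} to obtain an independent $\gamma$-set $D$ containing $v$. Since $\gamma(G-v)=\gamma(G)$, the set $D-\{v\}$ is too small to dominate $G-v$, so $v$ must admit a private $N(v)$-neighbor $p$ with $N[p]\cap D=\{v\}$. In each case the objective is to locate $u\in N(v)$ such that $(D-\{v\})\cup\{u\}$ is independent, of size $\gamma(G)$, and dominates $G$, which contradicts $v\in core(G)$. Claw-freeness at $v$ forces $\alpha(G[N(v)])\leq 2$, and claw-freeness at each $w_j$ forces $\alpha(G[N(w_j)])\leq 2$; combined with $P_6$-freeness applied to extensions of the induced cycle, these constraints sharply restrict where the private $N(v)$-neighbors of $v$ can live and should enable the swap.

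The main obstacle is the case $k=3$: the induced $C_6$ saturates $P_6$-freeness, so no naive extension of the cycle yields a forbidden $P_6$. Here I would also exploit Theorem \ref{vincore}.3, which supplies a $\gamma$-set $S\subseteq V-N[v]$ dominating $G-v$: each of $w_1,w_2,w_3$ is then dominated by some vertex of $S$, and tracking those $S$-dominators together with claw-freeness at the $w_j$ should either produce a genuine induced $P_6$ elsewhere in $G$ (contradicting $P_6$-freeness) or pin down the swap vertex $u$ that finishes the argument.
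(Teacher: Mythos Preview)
Your overall strategy---assume $v\in core(G)\cap V^0$ and derive a contradiction---matches the paper, and your observation that $P_6$-freeness caps the induced cycle through $v$ at length~$6$ is a reasonable opening move. But the proposal has a genuine gap: the $k=3$ case is not a residual obstacle to be cleaned up, it is essentially the entire proof, and your sketch for it (``tracking those $S$-dominators \ldots\ should either produce a genuine induced $P_6$ \ldots\ or pin down the swap vertex $u$'') is not an argument. In particular, the swap $(D-\{v\})\cup\{u\}$ you are hoping for need not exist as an \emph{independent} set dominating $G$; the $\gamma$-sets that witness $v\notin core(G)$ in the paper's analysis are built directly from the $C_6$ (pairs of antipodal vertices) and are not obtained by swapping one neighbour into a $\gamma$-set containing $v$.

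What the paper actually does is quite different in texture. It first passes to a reduced graph via a twin-clique partition (so that no two vertices have the same closed neighbourhood), then shows---using Allan--Laskar and the hypothesis $S\cap N[v]=\emptyset$ from Theorem~\ref{vincore}.3---that an induced $C_6$ through $v$ is \emph{forced}. It then classifies every vertex outside the $C_6$ by its adjacency pattern to the six cycle vertices: claw- and $P_6$-freeness leave only three patterns (three consecutive neighbours, four consecutive, or two separated pairs), and a further claw argument shows there is no vertex at distance~$\ge 2$ from the cycle, so $V=C_6\cup W$. A case analysis on how many ``two separated pairs'' vertices are present ($0,1,2,3$) then exhibits, in every case, two disjoint $\gamma$-sets of $G$, so $core(G)=\emptyset$. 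None of this structural work is visible in your outline, and I do not see how to bypass it with the local swap idea you propose.
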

\begin{proof}
From Theorem \ref{vincore} if  $\gamma(G-v)>\gamma(G)$ then $v\in core(G)$. Now let $v\in core(G)$.  Suppose Theorem \ref{vincore}.3: $\gamma(G-v)=\gamma(G)$ and every subset $S,\vert S\vert=\gamma(G),$ that dominates $G-v$ is such that $S\cap N[v]=\emptyset$.\\

From Fact \ref{cliqueneigh} $N(v)$ is not a clique. Since $G$ is claw-free we have $\alpha(N (v))=2$. For any independant $\gamma$-set $S$ it exists $a,b$ two private neighbors of $v$ such that $ab$ is a non-edge.\\

If $N(v)$ is not connected it consists of two anti-complete cliques $K_a$ and $K_b$. If $N(v)$ is connected then $P_k$ a maximal induced path is either $P_k=P_3$ or $P_k=P_4$. In the case $P_k=P_3$ then $N(v)$ consists of three cliques $K_a,K_b,K_{a'}$ such that $K_b$ is complete to $K_a$ and $K_{a'}$, and $K_a, K_{a'}$ are anti-complete. In the case $P_k=P_4$ then $N(v)$ consists of four cliques $K_a,K_b,K_{b'},K_{a'}$ with $K_b$ complete to $K_a$ and $K_{b'}$, $K_{b'}$ complete to $K_{a'}$ and $K_b$, the other cliques are pairwise anti-complete.

Let $u,w\in N_2(v)$ with a common neighbors $s\in K_p,p\in\{a,a',b,b'\}$. If $uw\not\in E$ then $G[\{u,w,s,v\}]$ is a claw. Thus all the vertices $u\in N_2(v)$ with at least a common neighbor $s, s\in N(v),$ induce a clique. With the same argument taking $u,w\in N_{p+1}(v),s\in N_p(v)$ and $v'\in N_{p-1}(v)\cap N(s)$ we have that for any $p\ge 1$ all the vertices $u\in N_{p+1}(v)$ with at least a common neighbor $s, s\in N_p(v),$ induce a clique.\\

We define a {\it twin clique partition} of $G$, $TCP(G)$ for short, as a partition $(K_0=\{v\}, K_1,\ldots,K_q)$ of $V$ such that each $K_i$ induces a clique, for every pair of $u,w\in K_i$ the vertices $u$ and $w$ are two false twins, i.e., $N[u]=N[w]$, and $K_i$ is inclusion-wise maximal, $1\le i\le q$. Note that $TCP(G)$ is unique and for any pair $K_i,K_j$ of  $TCP(G)$ either $K_i$ and $K_j$ are complete or anti-complete.\\

From $TCP(G)$ we define its reduced graph $H=(K,F)$ as follows. Its vertex set is $K=\{K_0=\{v\}, K_1,\ldots,K_q\}$ and two vertices of $K$ are adjacent if and only if their corresponding cliques in $TCP(G)$ are complete.

Note that since $G$ is (claw, $P_6$)-free $H$ is (claw,$P_6$)-free too. Moreover no pair $u,w$ of vertices of $H$ is such that $N[u]=N[w]$.\\

We give some relations between $G$ and $H$.
Note that for any $\gamma$-set $S$ of $G$ we have $\vert K_i\cap S\vert\le 1,0\le i\le q$. Also $\gamma(G)=\gamma(H)$. Moreover we have.
\begin{itemize}
\item $\vert K_i\cap S\vert= 1$ for any $\gamma$-set of $G$ if and only if $K_i\in core(H)$;
\item $\vert K_i\cap S\vert= 0$ for any $\gamma$-set of $G$ if and only if $K_i\in anticore(H)$.
\end{itemize}

Since $G$ and $H$ are equivalent relatively to the minimum dominating sets in the remaining of the proof we will write $G$ instead of $H$.\\

From Theorem \ref{vincore} we have that any $u\in N(v)$ has a neighbor in $N_2(v)$. Let us denote by $N_k(v,w)$ the set of vertices that are neighbor of $w$ and at distance $k$ of $v$.\\

Let $c\in N_2(v,a)\cap N_2(v,b)$.  $a,b$ are two private neighbors of $v$ relatively to $S$ so $c\not\in S$. Hence it exists $d\in S$ that dominates $c$, but $da, db$ are non-edges  and $G[\{a,b,c,d\}]$ is a claw. Thus $ N_2(v,a)\cap N_2(v,b)=\emptyset$.\\

Let $S'$ be an independant  $\gamma$-set of $G-v$. In $S'$ $a,b$ are dominated, respectively, by $\alpha \in N_2(v,a),\beta \in N_2(v,b),\alpha\ne \beta,$ and $\alpha\beta$ is a non-edge. Since $a,b$ are two private neighbors of $v$ relatively to $S$ we have that $\alpha,\beta \not \in S$ so it exists $\delta\in S$ that dominates $\alpha$ in $S$. If  $\delta\beta$ is a non-edge then $\delta-\alpha-a-v-b-\beta$ is a $P_6$. So $\delta\beta$ is an edge. Hence $G[\{v,a,\alpha,\delta,\beta,b\}]=C_6$. \\

Let the induced $C_6$ of $G$ be $C_6=\{v_1,v_2,v_3,w_1,w_2,w_3\}$. If $V=C_6$ then $core(G)=\emptyset$. So it exists  $w\in V-C_6$ and an edge $wu,u\in C_6$ since $G$ is connected. W.l.o.g let $u=v_1$. If $wv_2,ww_3$ are two non-edges then $G[\{w,v_1,v_2,w_1\}]$ is a claw. 
So, w.l.o.g., we can suppose that $wv_2$ is an edge.
 If $w$ has exactly two neighbors, $v_1,v_2$, in $C_6$ then $w-v_2-v_3-w_1-w_2-w_3$ is a $P_6$. If $w$ has exactly three neighbors in $C_6$ they must be successive else there is a claw. If $w$ has exactly four neighbors in $C_6$ either they are successive or they consist of two pairs of adjacent vertices separated by one vertex (at left and at right), else there is a claw. If $w$ has at least five neighbors in $C_6$ then there is a claw. Hence the vertices at distance one from $C_6$ are partioned into three sets: 
 \begin{description}
\item $W_3=\{w: w \textrm{ has exactly three successive neighbors in }C_6\}$;
\item  $W_4=\{w: w \textrm{ has exactly four successive neighbors in }C_6\}$;
\item  $W_2=\{w: w \textrm{ has two pairs of two successive neighbors in }C_6\}$.
\end{description}
Let $W=W_3\cup W_4\cup W_2$.

Let $u$ be a vertex at distance two from $C_6$. $u$ is adjacent to $w\in W$. Now since each $w\in W$ has two non-adjacent neighbors in $C_6$, $G$ has a claw. Hence $V=C_6\cup W,W\not=\emptyset$.\\

$\gamma_1=\{v_1,w_1\},\gamma_2=\{v_2,w_2\},\gamma_3=\{v_3,w_3\}$ are three distinct $\gamma$-sets of $C_6$.\\

When $W_2=\emptyset $, $\gamma_1$ and $\gamma_2$ are two distinct $\gamma$-sets of $G$ and $core(G)=\emptyset$.\\

Let $w\in W_2$. W.l.o.g. let $v_1,v_2,w_1,w_2$ be the neighbors of $w$. Suppose that it exists $w'\in W_2$ with the same neighbors as $w$. Since $G$ is a Twin Clique Partition $ww'$ must be a non-edge.  But $G[\{v_2,v_3,w,w'\}]$ is a claw.\\

It follows that $\vert W_2\vert\le 3$. Suppose that $W_2=\{w\}$. Let $v_1,v_2,w_1,w_2$ be the neighbors of $w$. Since each vertex $u\in W_3\cup W_4$ has a neighbor in $\gamma_1=\{v_1,w_1\}$ and another neighbor  in $\gamma_2=\{v_2,w_2\}$, $\gamma_1$ and $\gamma_2$ are two disjoint $\gamma$-sets of $G$ and $core(G)=\emptyset$.\\

Now let $W_2=\{w_{12},w_{13}\}$. W.l.o.g. let $v_1,v_2,w_1,w_2$ be the neighbors of $w_{12}$ and $v_1,v_3,w_1,w_3$ be the neighbors of $w_{13}$. Let $u\in W_3$. First suppose that the neighbors of $u$ are $v_1,v_2,v_3$. Then $G[\{v_1,w_3,w_{12},u\}]$ is a claw. By symmetry  there is one remaining case when the neighbors of $u$ are $w_3,v_1,v_2$. Then $G[\{v_2,u,w_{12},v_3\}]$ is a claw. Hence $W_3=\emptyset$. Let $u\in W_4$. First suppose that the neighbors of $u$ are $v_1,v_2,v_3,w_1$. Then $G[\{w_1,u,w_{13},w_2\}]$ is a claw. By symmetry the remaining case is when the neighbors of $u$ are $v_2,v_3,w_1,w_2$ but then  $G[\{w_2,u,w_{12},w_3\}]$ is a claw. Thus $W_4=\emptyset$ and $V=C_6\cup W_2$. Now $\gamma_1=\{v_1,w_1\}$ and $W_2=\{w_{12},w_{13}\}$ are two disjoint $\gamma$-sets of $G$. Hence $core(G)=\emptyset$.\\

Let $W_2=\{w_{12},w_{13},w_{23}\}$. Let $v_1,v_2,w_1,w_2$ be the neighbors of $w_{12}$, let $v_1,v_3,w_1,w_3$ be the neighbors of $w_{13}$ and let $v_2,v_3,w_2,w_3$ be the neighbors of $w_{23}$. As above $W_3\cup W_4=\emptyset$ (each vertex $u\in W_3\cup W_4$ induces a claw). Hence $V=C_6\cup W_2$. Then a $\gamma$-set of $G$ has size three. Now $\{v_1,w_2,v_3\}$ and $\{w_1,v_2,w_3\}$ are two disjoint $\gamma$-sets of $G$ and $core(G)=\emptyset$.\\

Hence the Twin Clique Partition of $G$ contains no vertex in $core(TCP(G))\cap V^0$ so $G$ as no vertex in $core(G)\cap V^0$.
\end{proof}

The Figure \ref{clawfreeP6} shows a $(claw,P_7)$-free graph $G$  with a vertex $v\in core(G)\cap V^0$. Since $G$ contains $P_6$ as an induced subgraph our result is tight.\\
 \begin{figure}[htbp]
\begin{center}
 \includegraphics[width=5cm, height=4cm, keepaspectratio=true]{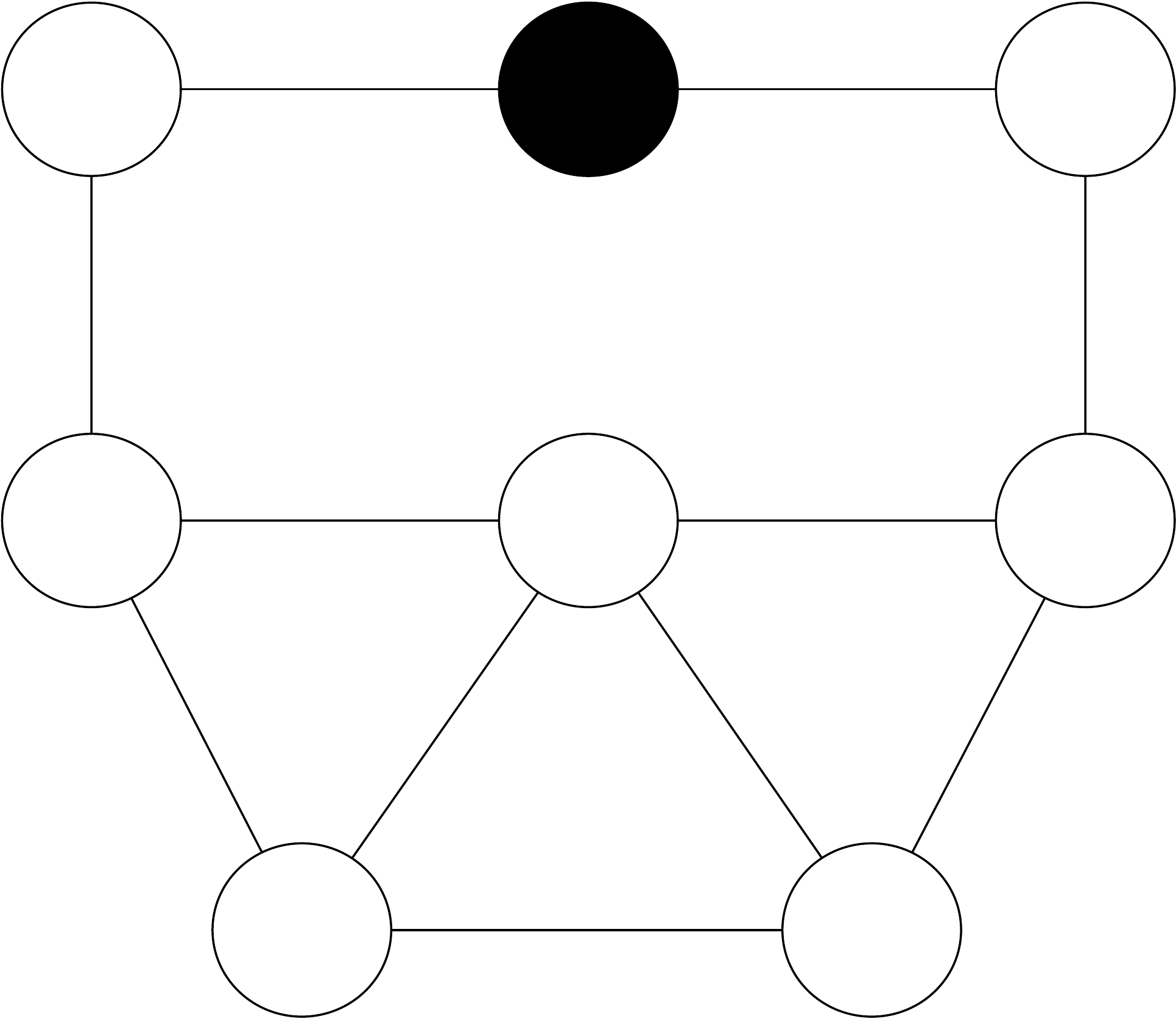}
\end{center}
\caption{A $(claw,P_7)$-free graph $G$. The black vertex is in $core(G)\cap V^0$.}
\label{clawfreeP6}
\end{figure}

\begin{prop}\label{clawbullfreecore}
Let $G=(V,E)$ be a connected $(claw,bull)$-free graph with at least two vertices. $v\in core(G)$ if and only if $\gamma(G-v)>\gamma(G)$.
\end{prop}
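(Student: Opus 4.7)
The $(\Leftarrow)$ direction is immediate from Theorem~\ref{vincore}: since $G$ is connected with $|V|\ge 2$, $v$ is not isolated, so $\gamma(G-v)>\gamma(G)$ places $v$ in $V^+\subseteq core(G)$. For the converse, I argue by contradiction: assuming $v\in core(G)\cap V^0$, I imitate the opening steps of the proof of Proposition~\ref{clawP6freecore} and then exploit bull-freeness in place of $P_6$-freeness in the final case analysis.

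By Allan--Laskar, fix an independent $\gamma$-set $S\ni v$. By Fact~\ref{cliqueneigh} and claw-freeness, $\alpha(N(v))=2$; moreover, $pn[v,S]\cap N(v)$ cannot be a clique, for otherwise any $u$ in it would yield the $\gamma$-set $(S\setminus\{v\})\cup\{u\}$ avoiding $v$. Hence there exist two non-adjacent $a,b\in pn[v,S]\cap N(v)$. Since $v\in V^0$, Theorem~\ref{vincore}.3 and Allan--Laskar applied to $G-v$ give an independent $\gamma$-set $D$ of $G-v$ with $D\cap N[v]=\emptyset$. The same claw argument as in Proposition~\ref{clawP6freecore} (a common $N_2(v)$-neighbor $c$ of $a,b$, dominated by some $d\in S$, would yield the claw $\{a,b,c,d\}$) shows that the dominators $\alpha\in D\cap N(a)$ and $\beta\in D\cap N(b)$ are distinct elements of $N_2(v)\setminus S$ with $\alpha\beta$ a non-edge. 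Let $\delta\in S$ dominate $\alpha$, so $\delta\neq v$. If $\delta\beta$ is an edge, $\{v,a,\alpha,\delta,\beta,b\}$ induces a $C_6$; otherwise, any $\delta'\in S$ dominating $\beta$ satisfies $\delta\neq\delta'$ and $\{\delta,\alpha,a,v,b,\beta,\delta'\}$ induces a $P_7$.

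I then reduce $G$ to its Twin Clique Partition quotient as in Proposition~\ref{clawP6freecore} (bull-freeness is hereditary and preserved under this reduction) and analyze any vertex $w$ attached to the $C_6$ (respectively the $P_7$). Claw-freeness forces $w$ to have at least three neighbors on the base $C_6$, and bull-freeness rules out almost every remaining attachment pattern. Concretely, if $w$'s $C_6$-neighbors include an adjacent pair $v_1,v_2$ but miss both $v_3$ and $w_3$, then $\{v_1,v_2,w,w_3,v_3\}$ is an induced bull (triangle $\{v_1,v_2,w\}$, pendants $w_3$ at $v_1$ and $v_3$ at $v_2$); the pattern ``two adjacent plus one at $C_6$-distance $2$'', e.g.\ $\{v_1,v_2,w_1\}$, produces the bull $\{v_1,v_2,w,w_3,w_1\}$; four successive neighbors $\{v_1,v_2,v_3,w_1\}$ yield the bull $\{w,v_1,v_2,w_1,w_3\}$; an independent triple of $C_6$-neighbors or two non-adjacent $C_6$-neighbors produce a claw. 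Only the pattern ``exactly three consecutive $C_6$-neighbors'' survives.

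The main obstacle is to dispose of this residual pattern, since no bull appears within $C_6\cup\{w\}$ alone. I plan to mirror the $W_2,W_3,W_4$ enumeration of Proposition~\ref{clawP6freecore}: for each configuration of three-consecutive attachments in $V\setminus C_6$, either two such attachments with overlapping but distinct neighborhoods produce a bull across $C_6$ (for instance, a $w$ adjacent to $\{v_1,v_2,v_3\}$ together with a non-adjacent $w'$ adjacent to $\{v_2,v_3,w_1\}$ yields the bull $\{v_3,w',w_1,w,w_2\}$), or the whole graph is restricted enough that a $\gamma$-set avoiding $v$ can be exhibited directly (for instance, when all attachments share the triple $\{v_1,v_2,v_3\}$, the pair $\{v_2,w_2\}$ is already a $v$-free $\gamma$-set, contradicting $v\in core(G)$). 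The Case~II $P_7$ sub-case will be handled symmetrically, the same bull and claw obstructions pinning down the few admissible attachment patterns and each such pattern in turn yielding a $v$-free $\gamma$-set.
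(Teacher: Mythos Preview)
Your opening is sound and matches the paper's, but the proof plan has a genuine gap at the point where you bifurcate into a $C_6$ case and a $P_7$ case.

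The $P_7$ branch is not ``symmetric'' to the $C_6$ branch and you give no argument for it. A path is not closed, so an attachment analysis on $P_7$ does not terminate the way it does on a cycle: nothing prevents the path from extending past $\delta$ or $\delta'$, and you have not shown that the graph is exhausted by $P_7$ together with vertices at distance one from it. Even in the $C_6$ branch your disposal of the residual ``three consecutive neighbours'' pattern is only illustrated by two examples; you do not handle the case where $w$ and $w'$ are adjacent, nor the case where their $C_6$-neighbourhoods are disjoint, nor the interaction of three or more such attachments, and your example ``all attachments share $\{v_1,v_2,v_3\}$'' is in fact impossible in the TCP quotient (such a $w$ would be a false twin of $v_2$).

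The paper sidesteps all of this by not bifurcating. From the induced $P_5$ $\alpha\!-\!a\!-\!v\!-\!b\!-\!\beta$ and claw-freeness it invokes Lemma~\ref{connect} (since $v\in core(G)\cap V^0$, $v$ cannot be a cut-vertex with clique neighbourhoods in each component) to obtain directly an induced cycle $C_k$, $k\ge 6$, through $v$. The attachment analysis is then uniform: claw-freeness and bull-freeness force every $w\notin C_k$ adjacent to $C_k$ to have \emph{exactly three consecutive} $C_k$-neighbours, say $v_1,v_2,v_3$. Rather than enumerating configurations, the paper uses the TCP reduction once: since $N[w]\neq N[v_2]$, there is a neighbour $u$ of $w$ outside $C_k$ with $u\notin N(v_2)$; $u$ also has three consecutive $C_k$-neighbours, so $|N(w)\cap N(u)\cap C_k|\le 1$, and then either $\{w,u,v_1,v_3\}$ is a claw (overlap empty) or $\{w,u,v_1,v_3,v_{k-1}\}$ is a bull (overlap $\{v_1\}$, symmetrically $\{v_3\}$). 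This single step replaces your entire residual enumeration and your unhandled $P_7$ case.
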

\begin{proof}
The beginning of the proof if the same as for Property \ref{clawP6freecore}'s proof. In $G$ (corresponding to the reduced graph of  $TCP(G)$)  $\alpha-a-v-b-\beta$ is an induced path $P$ and it exists $\delta,\delta\not\in P$ which is a neighbor of $\alpha$ and which is not a neighbor of $v,a,b$.  Hence from Lemma \ref{connect} there is an induced cycle $C_k,k\ge 6,$ that contains $v$.

Let the set of vertices of $C_k$  be $C_k=\{v_1,v_2,\ldots,v_k\}$. If $V=C_k$ then $core(G)=\emptyset$. 
So it exists  $w\in V-C_k$ and an edge $wu,u\in C_k$. W.l.o.g let $u=v_1$.
If $wv_2,wv_k$ are two non-edges then $G[\{w,v_1,v_2,v_k\}]$ is a claw.  If $w$ has five neighbors in $C_k$ the $G$ has a claw. If $w$ has exactly two (successive) neighbors in $C_k$, says $v_1,v_2$,  then $G[\{w,v_1,v_2,v_3,v_k\}]$ is a bull. If $w$ has exactly four  neighbors in $C_k$, says $v_1,v_2,v_3,v_4$  (from above they are successive), then $G[\{w,v_1,v_3,v_4,v_5\}]$ is a bull. Hence $w$ has exactly three successive neighbors in $C_k$, says $v_1,v_2,v_3$.

Let $u,u\not\in C_k$ be a neighbor of $w$. If $u$ has no neighbor in $C_k$ then $G[\{w,u,v_1,v_3\}]$ is a claw. Thus $u$ has three consecutive neighbors in $C_k$. Since $G$ is the reduced graph of $TCP(G)$ we have $N[w]\ne N[v_2]$ so, w.l.o.g., $u$ exists and we can suppose that $u\not\in N(v_2)$. Since $k\ge 6$ we have  $\vert N(w)\cap N(u)\vert\le 1$. If $N(w)\cap N(u)=\emptyset$ then $G[\{w,u,v_1,v_3\}]$ is a claw. So we can suppose that $N(w)\cap N(u)=\{v_1\}$ but then $G[\{w,u,v_1,v_3,v_{k-1}\}]$ is a bull.

Hence if $v\in core(G)$ then $v\not\in V^0$.
\end{proof}

One can remark that the graph in Figure \ref{clawfreeP6} is claw-free but contains a bull. It has a vertex in $core(G)\cap V^0$. Hence our result is tight.\\

Putting the properties together we have the following.
\begin{coro}
If $H$  is an induced subgraph of $P_6$ or of the bull  then for $(H,K_{1,3})$-free connected graphs with at least two vertices then  $v\in core(G)$ implies that $v\in V^+$. Moreover there exist graphs containing a bull or a $P_6$ with a vertex in $core(G)\cap V^0$.
\end{coro}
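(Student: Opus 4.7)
The plan is to observe that this corollary is essentially a repackaging of Propositions \ref{clawP6freecore} and \ref{clawbullfreecore}, combined with the example of Figure \ref{clawfreeP6}. The only nontrivial piece is the direction of the hereditary inclusion for induced-subgraph-freeness.

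First I would establish the following observation: if $H$ is an induced subgraph of $H'$, then any $H$-free graph is also $H'$-free. Indeed, if $G$ contained $H'$ as an induced subgraph, then since $H$ is itself induced in $H'$, $G$ would contain $H$ as an induced subgraph, contradicting $H$-freeness. Applied to our setting: whenever $H$ is an induced subgraph of $P_6$, the class of $(H, K_{1,3})$-free graphs is contained in the class of $(P_6, K_{1,3})$-free graphs; and whenever $H$ is an induced subgraph of the bull, the class of $(H, K_{1,3})$-free graphs is contained in the class of $(bull, K_{1,3})$-free graphs.

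Next, I would invoke the two previous propositions. For a connected $(H, K_{1,3})$-free graph with at least two vertices, with $H$ an induced subgraph of $P_6$, Proposition \ref{clawP6freecore} applies and gives $v \in core(G)$ iff $\gamma(G-v) > \gamma(G)$, which by Theorem \ref{vincore} means precisely $v \in V^+$. The argument when $H$ is an induced subgraph of the bull is identical, using Proposition \ref{clawbullfreecore} in place of Proposition \ref{clawP6freecore}. This establishes the first assertion.

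For the ``moreover'' part, I would simply exhibit the graph already displayed in Figure \ref{clawfreeP6}. It was noted immediately after Proposition \ref{clawP6freecore} that this graph is $(claw, P_7)$-free and hence contains an induced $P_6$, and it was noted after Proposition \ref{clawbullfreecore} that the same graph, while claw-free, also contains an induced bull. Since it has a vertex in $core(G) \cap V^0$ by construction, it certifies that neither the $P_6$-free hypothesis nor the bull-free hypothesis can be dropped in the first assertion. There is no real obstacle here; the only care required is to state clearly that $H$-freeness \emph{inherits upwards} from $H$ to any of its induced-supergraphs (rather than downward), so that assuming freeness of a smaller forbidden pattern is a \emph{stronger} restriction, placing us inside the scope of the two previous propositions.
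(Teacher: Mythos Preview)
Your proposal is correct and follows the same approach as the paper, which simply states ``Putting the properties together we have the following'' before the corollary; you have made explicit the hereditary observation (that $H$-free implies $H'$-free when $H$ is induced in $H'$) and correctly cited Figure~\ref{clawfreeP6} for the tightness example. One small slip in wording: being $(claw,P_7)$-free does not by itself \emph{imply} that the graph contains an induced $P_6$; rather, the paper asserts separately that this particular graph does contain a $P_6$ (and a bull), so drop the word ``hence'' there.
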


\subsection{Bipartite graphs}
The bipartite graphs are the graphs that are odd cycle free. The graph classes we studied above don't intersect the class of bipartite because their graphs can contain triangles. Trivially a connected bipartite claw-free graphs $G$ with at least two vertices is either a path or an even cycle. When $G$ is a path then $core(G)=V^+$. If $G$ is a cycle, $G$ is bipartite $2$-regular, then $core(G)=\emptyset$. 

Contrary to the $2$-regular bipartite graphs, cubic ($3$-regular) bipartite graphs may have vertices in $core(G)$.
Figure \ref{BipCubV0} shows a cubic  bipartite graph with a vertex in $ core(G)\cap V^0$. 
 \begin{figure}[htbp]
\begin{center}
 \includegraphics[width=8cm, height=5cm, keepaspectratio=true]{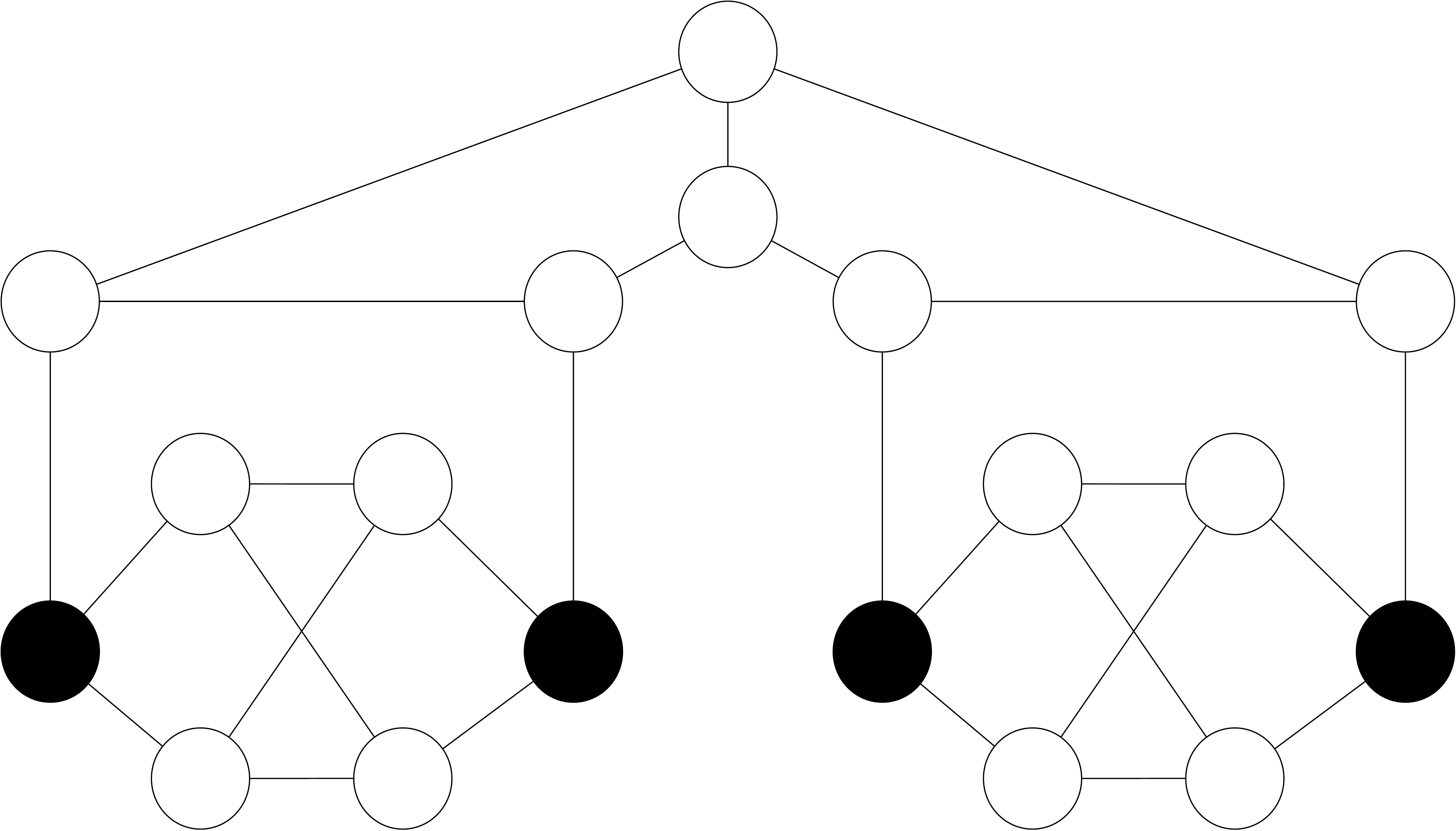}
\end{center}
\caption{A bipartite cubic graph with $\gamma(G)=5$. The black vertices are in $core(G)\cap V^0$.}
\label{BipCubV0}
\end{figure}

\section{Particular partitions of the vertex-set}
Relatively to the potential partitions of $V$ into $core(G),corona(G)-core(G),anticore(G)$ and $V^0,V^+$, we give connected graphs (without isolated vertices) whose vertex-set correspond to a specific partition. Two of them answer to two open questions by V. Samodivkin in \cite{Samodivkin}.

 $G=(V,E)$ a connected graph with at least two vertices for which $V=V^-$ is given in \cite{DomBook} (p. 139, Fig. 5.2). 
 
 Graphs such that $V=V^0$ are characterized in \cite{DomBook} (P.147, Theorem 5.23): these graphs must be such that $core (G)=\emptyset$, the complete bipartite  graph $K_{3,3}$ is one of them. Authors showed that graphs with $core (G)\ne\emptyset$ can exist but no such graph is exhibited. Finding such a graph correspond to the first question in the following article. In \cite{Samodivkin} V. Samodivkin raise the two following open questions.
\begin{enumerate}
\item Does there exists $G=(V,E)$ a connected graph  such that $\gamma(G-v)=\gamma(G)$ for all $v\in V$ and there is $u\in V, u\in core(G)$ ?
\item Does there exists $G=(V,E)$ a connected graph  such that there exists $w\in V, w\in V^+$,  all $v\in V, v\ne w,$ is such that $\gamma(G-v)=\gamma(G)$ and there is $u\in V,u\ne w, u\in core(G)$ ?
\end{enumerate}
The graph given in Figure \ref{CV0} give a positive answer to the first question.
 \begin{figure}[htbp]
\begin{center}
 \includegraphics[width=5cm, height=4cm, keepaspectratio=true]{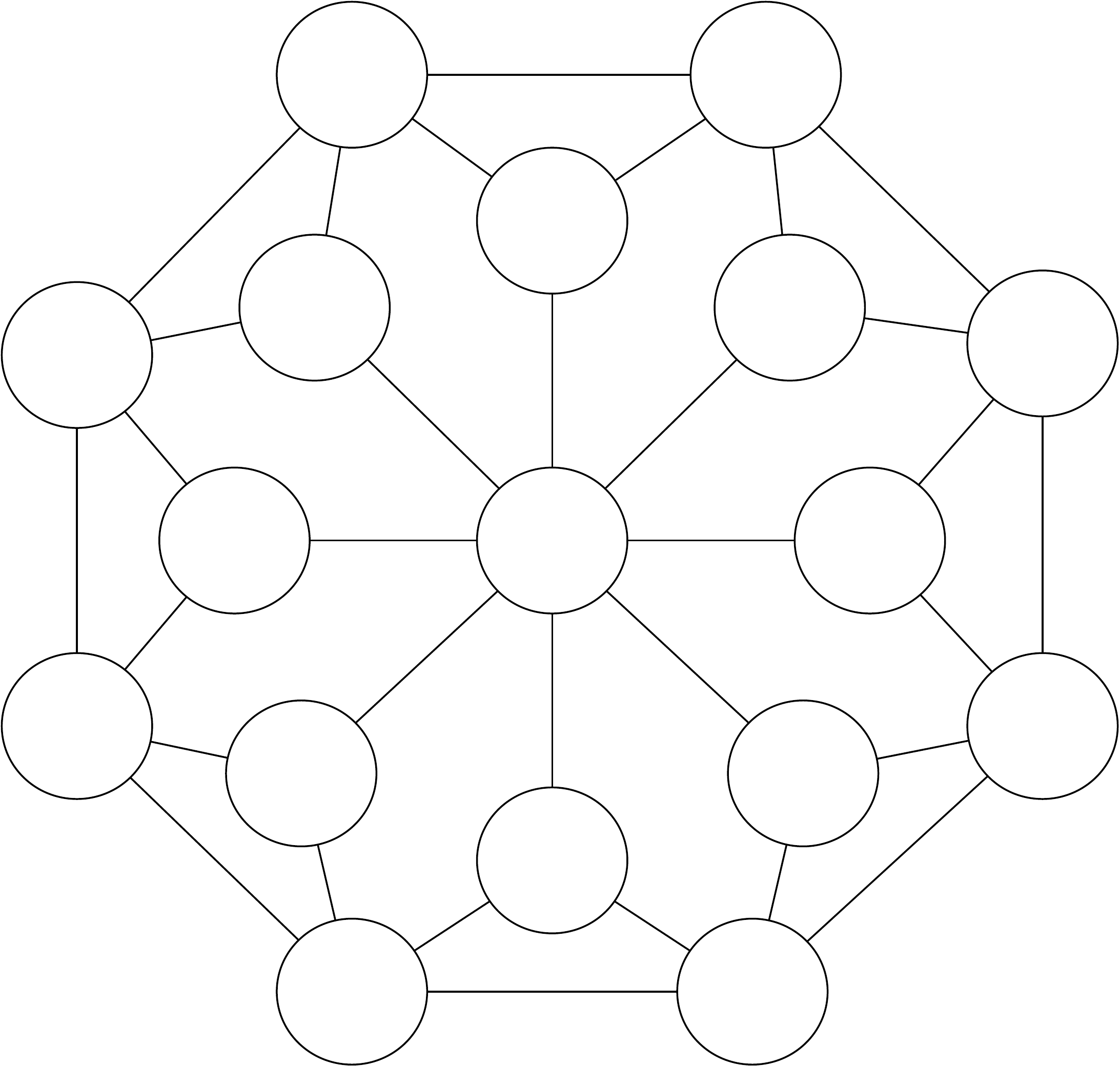}
\end{center}
\caption{All the vertices are in $V^0$, the central vertex is in $core(G)$.}
\label{CV0}
\end{figure}

The graph given in Figure \ref{CV+} give a positive answer to the second question.
 \begin{figure}[htbp]
\begin{center}
 \includegraphics[width=14cm, height=5cm, keepaspectratio=true]{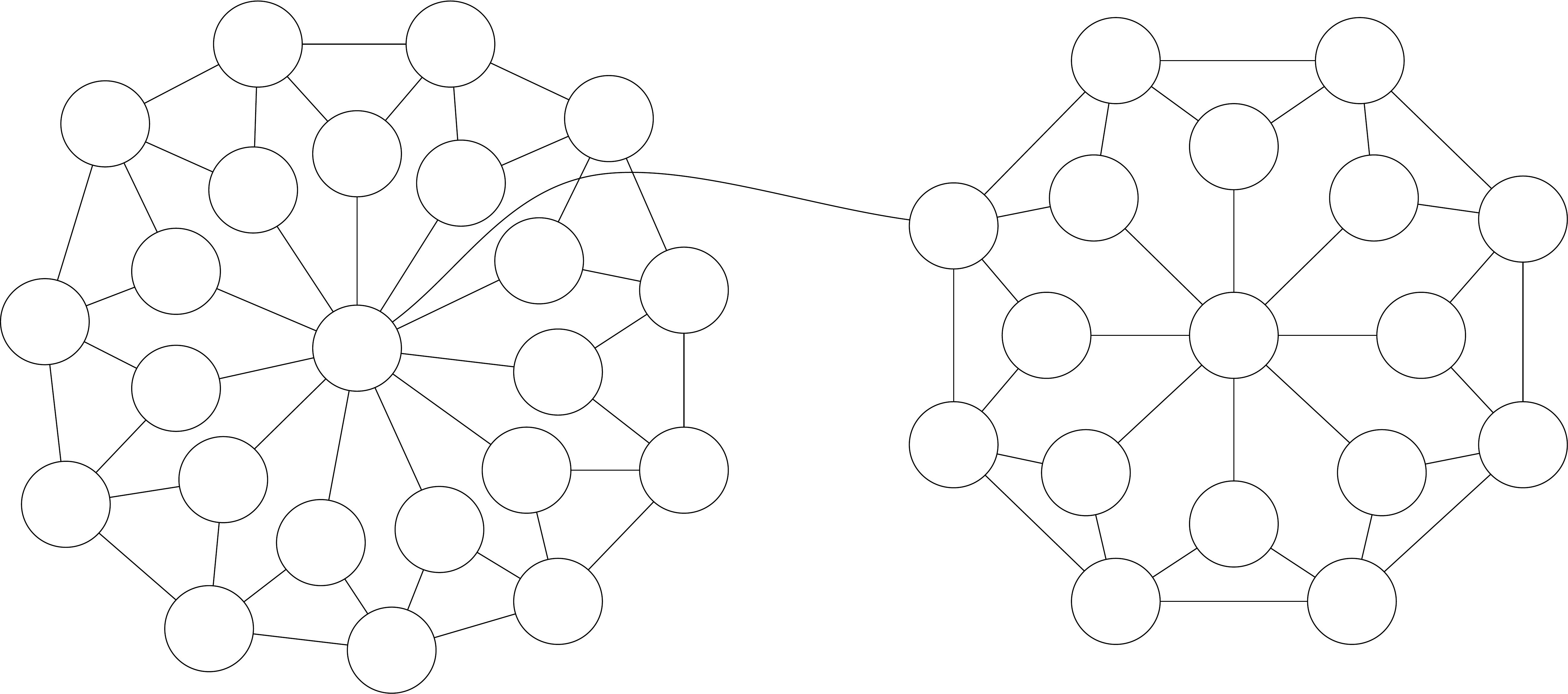}
\end{center}
\caption{The central vertex on left is in $V^+$, all the other vertices are in $V^0$, the central vertex on right is $V^0\cap core(G)$.}
\label{CV+}
\end{figure}

The graph given in Figure \ref{CV+0-} shows $G$ the graph  of minimum order such that $V^+,V^0,V^-\ne \emptyset$ and $anticore(G)=\emptyset$. The proof of its minimality is obtained by a computer.

 \begin{figure}[htbp]
\begin{center}
\includegraphics[width=5cm, height=4cm, keepaspectratio=true]{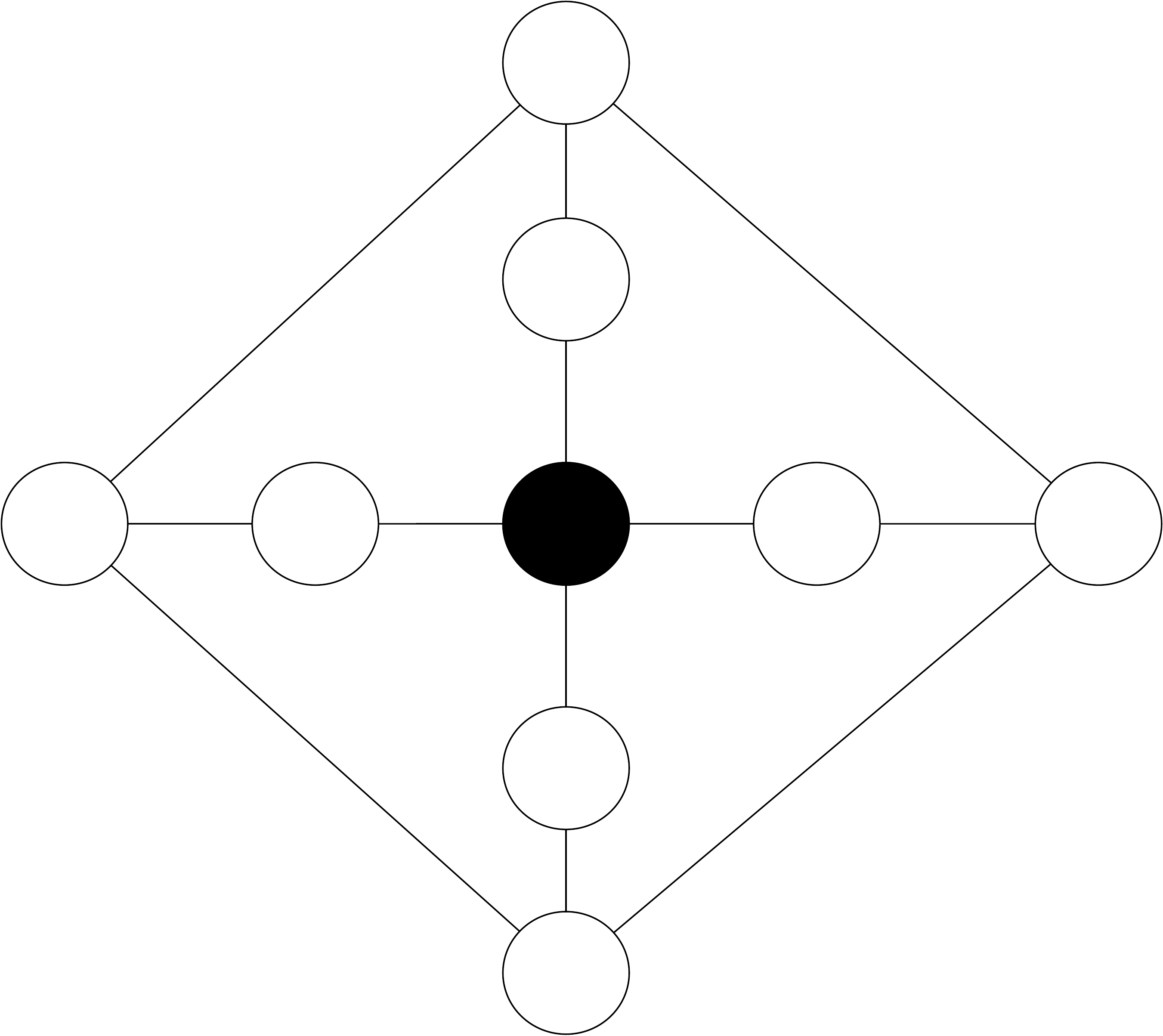}
\end{center}
\caption{The black vertex is in $V^+$, its four neighbors are in $V^0$, the four other vertices are in $V^-$.}
\label{CV+0-}
\end{figure}

The graph given in Figure \ref{V0Anti} is such that $V=(core(G)\cap V^0)\cup anticore(G)$.

\begin{figure}[htbp]
\begin{center}
\includegraphics[width=5cm, height=4cm, keepaspectratio=true]{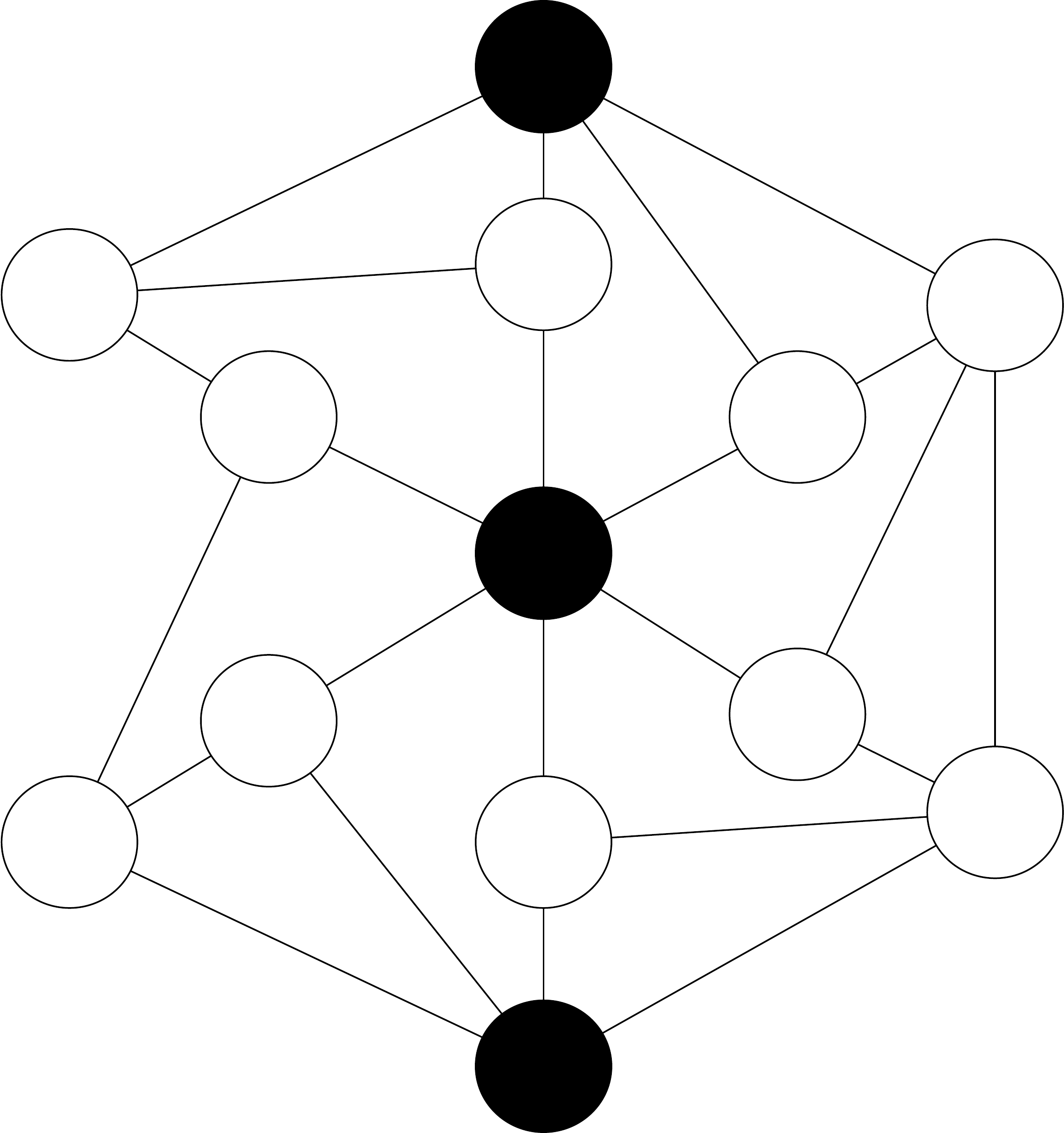}
\end{center}
\caption{A graph with $V=(core(G)\cap V^0)\cup anticore(G)$. The black vertices are in $core(G)\cap V^0$.}
\label{V0Anti}
\end{figure}

\section{Conclusion}
We gave a characterization for the vertices belonging  to all, none, or some minimum dominating sets in a graph. When the graph has no isolated vertices, the vertices belonging to all minimum dominating set are of two types.  The ones that increase de dominating number when suppressed from the graph, the ones for which the dominating number stays unchanged. For some subclasses of graphs we showed that only vertices of the first type may occur. Also we gave some graphs for which the partition of the vertex set omits some particular type of vertices, relatively to our characterization. 

Further directions of research may concern a {\it good} characterization of these particular classes of graphs. These studies include  the following question: For theses specific classes of graphs  the status of a given vertex can be decided in polynomial time?

\end{document}